\documentclass[11pt]{article} 

\usepackage[ruled,linesnumbered]{algorithm2e} 
\usepackage{amscd} 
\usepackage{amsfonts} 
\usepackage{amsmath} 
\usepackage{amssymb} 
\usepackage{amsthm} 
\usepackage{bm} 
\usepackage{booktabs} 
\usepackage{delarray} 
\usepackage{extarrows} 
\usepackage{geometry} 
\usepackage{graphicx} 
\usepackage{hyperref} 
\usepackage{longtable} 
\usepackage[all]{xy} 
\usepackage[affil-it]{authblk} 

\usepackage{adjustbox}
\usepackage{booktabs}
\usepackage{caption}
\usepackage{color}
\usepackage{enumitem} 
\usepackage{float}
\usepackage{hyperref}
\usepackage{makecell} 
\usepackage{mathrsfs} 
\usepackage{mathtools} 
\usepackage{multirow} 
\usepackage{relsize} 

\setlist[enumerate]{itemsep=0pt,parsep=0pt}

\geometry{left=2.54cm,right=2.54cm,top=3.09cm,bottom=3.09cm} 

\title{On the mod $p$ cohomology for $\mathrm{GL}_2$}
\author{Yitong Wang\thanks{E-mail address: \texttt{yitong.wang@universite-paris-saclay.fr}}}
\date{}
\affil{Universit\'e Paris-Saclay, Laboratoire de Math\'ematiques d'Orsay, 91405, Orsay, France}

\allowdisplaybreaks[4] 

\def\AA{\mathbb{A}}

\def\FF{\mathbb{F}}

\def\NN{\mathbb{N}}

\def\RR{\mathbb{R}}

\def\VV{\mathbb{V}}

\def\ZZ{\mathbb{Z}}

\def\NNN{\mathbb{Z}_{\geq 0}}


\def\cE{\mathcal{E}}

\def\cI{\mathcal{I}}
\def\cJ{\mathcal{J}}

\def\cM{\mathcal{M}}

\def\cO{\mathcal{O}}

\def\fM{\mathfrak{M}}

\def\ff{\mathfrak{f}}
\def\fm{\mathfrak{m}}
\def\fp{\mathfrak{p}}
\def\fq{\mathfrak{q}}
\def\ft{\mathfrak{t}}
\def\fw{\mathfrak{w}}


\def\Ann{\operatorname{Ann}}
\def\Art{\operatorname{Art}}


\def\det{\operatorname{det}}

\def\dim{\operatorname{dim}}

\def\End{\operatorname{End}}
\def\Fil{\operatorname{Fil}}

\def\Gal{\operatorname{Gal}}

\def\GL{\operatorname{GL}}

\def\Hom{\operatorname{Hom}}

\def\JH{\operatorname{JH}}
\def\Ker{\operatorname{Ker}}

\def\M{\operatorname{M}}
\def\Mat{\operatorname{Mat}}

\def\mod{\operatorname{mod}}
\def\Mod{\operatorname{Mod}}

\def\Proj{\operatorname{Proj}}
\def\rad{\operatorname{rad}}
 
\def\reg{\operatorname{reg}}
\def\Rep{\operatorname{Rep}}
\def\Res{\operatorname{Res}}

\def\sgn{\operatorname{sgn}}

\def\soc{\operatorname{soc}}

\def\Spec{\operatorname{Spec}} 
\def\ss{\operatorname{ss}}

\def\Sym{\operatorname{Sym}}

\def\ab{^{\mathrm{ab}}}
\def\Admv{\operatorname{Adm}^{\vee}}
\def\betabar{\overline{\beta}}

\def\corr{\leftrightarrow}

\def\eps{\varepsilon}
\def\eqdef{\overset{\mathrm{def}}{=}}
\def\Fp{\mathbb{F}_p}
\def\Fpbar{\overline{\mathbb{F}_p}}

\def\into{\hookrightarrow}
\def\inv{^{-1}}
\def\ism{\stackrel{\sim}{\rightarrow}}
\def\j{^{(j)}}
\def\ji{^{(j-1)}}
\def\jj{^{(f-1-j)}}
\def\jk{^{(j+1)}}

\def\loc{\textit{loc.cit.}}
\def\mbar{\overline{\mathfrak{M}}}
\def\OK{\mathcal{O}_K}
\def\onto{\twoheadrightarrow}

\def\Qp{\mathbb{Q}_p}

\def\rbar{\overline{r}}
\def\rhobar{\overline{\rho}}
\def\taubar{\overline{\tau}}
\def\tauw{\tau_{\widetilde{w}}}


\def\vpi{\varpi}

\def\we{\widetilde{\underline{W}}} 
\def\weee{\widetilde{\underline{W}}} 

\def\x{^{\times}}
\def\xj{^{*(j)}}
\def\Zp{\mathbb{Z}_p}


\newcommand{\bbbra}[1]{\left[#1\right]}
\newcommand{\bbra}[1]{\left(#1\right)}
\newcommand{\bigbra}[1]{\big(#1\big)}

\newcommand{\bra}[1]{(#1)}
\newcommand{\dbra}[1]{(\!(#1)\!)}
\newcommand{\ddbra}[1]{[\![#1]\!]}

\newcommand{\ovl}[1]{\overline{#1}}
\newcommand{\pmat}[1]{\begin{pmatrix}#1\end{pmatrix}}
\newcommand{\set}[1]{\{ #1 \}}
\newcommand{\smat}[1]{\left(\begin{smallmatrix}#1\end{smallmatrix}\right)}
\newcommand{\sset}[1]{\left\{ #1 \right\}}
\newcommand{\un}[1]{\underline{#1}}
\newcommand{\wh}[1]{\widehat{#1}}
\newcommand{\wt}[1]{\widetilde{#1}}

\begin{document}

\theoremstyle{plain} 
\newtheorem{definition}{Definition}[section] 
\newtheorem{remark}[definition]{Remark}
\newtheorem{example}[definition]{Example}
\newtheorem{proposition}[definition]{Proposition}
\newtheorem{lemma}[definition]{Lemma}
\newtheorem{corollary}[definition]{Corollary}
\newtheorem{theorem}[definition]{Theorem}
\newtheorem{conjecture}[definition]{Conjecture}

\theoremstyle{plain} 
\newtheorem{ddefinition}{Definition}[subsection] 
\newtheorem{rremark}[ddefinition]{Remark}
\newtheorem{eexample}[ddefinition]{Example}
\newtheorem{pproposition}[ddefinition]{Proposition}
\newtheorem{llemma}[ddefinition]{Lemma}
\newtheorem{ccorollary}[ddefinition]{Corollary}
\newtheorem{ttheorem}[ddefinition]{Theorem}
\newtheorem{cconjecture}[ddefinition]{Conjecture}

\maketitle
\begin{abstract}
    Let $p$ be a prime number and $F$ a totally real number field unramified at places above $p$. Let $\rbar:{\Gal}(\overline F/F)\rightarrow\GL_2(\ovl{\FF_p})$ be a modular Galois representation which satisfies the Taylor-Wiles hypothesis and some technical genericity assumptions. For $v$ a fixed place of $F$ above $p$, we prove that many of the admissible smooth representations of $\GL_2(F_v)$ over $\ovl{\FF_p}$ associated to $\rbar$ in the corresponding Hecke-eigenspaces of the mod $p$ cohomology have Gelfand--Kirillov dimension $[F_v:\Qp]$. This builds on and extends the work of Breuil-Herzig-Hu-Morra-Schraen in \cite{BHHMS} and Hu-Wang in \cite{HW}, giving a unified proof in all cases ($\rbar$ either semisimple or not at $v$). 

    \hspace{\fill}
    
    Keywords: Mod $p$ Langlands program, Gelfand-Kirillov dimension, Galois deformation rings.
\end{abstract}

\tableofcontents
\section{Introduction}

\hspace{1.5em}%
Let $p$ be a prime number. The $\mod p$ Langlands correspondence for $\GL_2(\Qp)$ is completely known by the work of \cite{Bre03}, \cite{Col10}, etc. However, the situation becomes much more complicated when we consider $\GL_2(L)$ for $L$ a nontrivial finite extension of $\Qp$. For example, there are many more irreducible admissible smooth representations of $\GL_2(L)$ over $\Fpbar$ and we don't have a classification of these representations (\cite{BP12}).

Motivated by the local-global compatibility results of Emerton (\cite{Eme11}), we study the representations of $\GL_2(L)$ that come from geometry and hope that these representations can realize a $\mod p$ Langlands correspondence for $\GL_2(L)$.

\hspace{\fill}

We introduce the global setup following \cite{BHHMS}. Let $F$ be a totally real number field which is unramified at places above $p$. Let $D$ be a quaternion algebra with center $F$ which is split at places above $p$ and at exactly one infinite place. Let $\FF$ be a sufficiently large finite extension of $\Fp$, which is considered as the coefficient field. For each compact open subgroup $V$ of $(D\otimes_F\AA_F^{\infty})^{\times}$, we denote by $X_V$ the associated smooth projective Shimura curve over $F$. Let $v$ be a fixed place of $F$ above $p$. Let $F_v$ be the completion of $F$ at $v$ and $f\eqdef[F_v:\Qp]$. We consider an admissible smooth representation of $\GL_2(F_v)$ over $\FF$ of the form
\begin{equation}\label{Intro Eq pi}
    \pi\eqdef\lim\limits_{\underset{V_v}{\longrightarrow}}\Hom_{\Gal(\overline{F}/F)}\bigbra{\overline{r},H^1_{\text{\'et}}(X_{V^vV_v}\times_F\overline{F},\FF)},
\end{equation}
where $V^v$ is a fixed compact open subgroup of $(D\otimes_F\AA_F^{\infty,v})^{\times}$, the inductive limit runs over compact open subgroups $V_v$ of $\GL_2(F_v)$, and $\overline{r}:\Gal(\overline{F}/F)\to\GL_2(\FF)$ is a continuous absolutely irreducible representation such that $\pi\neq0$. Let $I_{F_v}$ be the inertia subgroup of $F_v$. Let $k'$ be the quadratic extension of the residue field of $F_v$ and fix an embedding $k'\into\FF$. Let $\omega_{f'}$ be the corresponding Serre's fundamental character of level $f'$ for $f'\in \set{f,2f}$. We make the following assumptions on $\rbar$:
\begin{enumerate}
\item $\rbar|_{G_{F(\!\sqrt[p]{1})}}$ is absolutely irreducible;
\item for $w\!\nmid\! p$ such that either $D$ or $\rbar$ ramifies, the framed deformation ring $R_{\rbar_w}$ of $\rbar_w\eqdef \rbar\vert_{{\rm Gal}(\overline F_w/F_w)}$ over the Witt vectors $W(\FF)$ is formally smooth;
\item for $w\!\mid\!p$, $\rbar\vert_{I_{F_w}}$ is generic in the sense of \cite[\S11]{BP12};
\item $\rbar\vert_{I_{F_v}}$ is isomorphic to one of the following forms up to twist:
\begin{equation}\label{Intro Eq generic}
\begin{aligned}
    &\text{(a)}~\pmat{\omega_f^{\sum_{j=0}^{f-1} (r_{j}+1)p^j}&*\\0&1}~\text{with}~12\leq r_i\leq p-15;\\
    &\text{(b)}~\pmat{\omega_{2f}^{\sum_{j=0}^{f-1} (r_{j}+1)p^j}&0\\0&\omega_{2f}^{p^f(\text{same})}}~\text{with}~13\leq r_0\leq p-14~\text{and}~12\leq r_i\leq p-15~\text{for}~i>0.
\end{aligned}
\end{equation}
\end{enumerate}
We refer to \cite[Introduction]{BHHMS} for the notion of the Gelfand-Kirillov dimension. Our main result is the following:

\begin{theorem}[Corollary \ref{Global Thm main} (iii)]\label{Intro Thm main}
    Keep all the above assumptions on $F,D,\rbar$. Let $V^v=\prod_{w\neq v}V_w$ with $V_w=\GL_2(\cO_{F_w})$ if neither $D$ nor $\rbar$ ramifies at $w$, and $V_w\subseteq1+pM_2(\cO_{F_w})$ is normal in $\GL_2(\cO_{F_w})$ if $w\mid p$ and $w\neq v$. Then for $\pi$ as in (\ref{Intro Eq pi}) we have $\dim_{\GL_2(F_v)}(\pi)=f$.
\end{theorem}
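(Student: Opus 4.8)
\medskip

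\noindent\textbf{Sketch of the strategy.} The plan is to obtain the equality $\dim_{\GL_2(F_v)}(\pi)=f$ from the Taylor--Wiles--Kisin patching construction, together with a precise and \emph{uniform} analysis of the mod $p$ fibres of the relevant local deformation rings of $\rbar_v\eqdef\rbar\vert_{\Gal(\overline F_v/F_v)}$ in the three cases hidden in \eqref{Intro Eq generic}: $\rbar\vert_{I_{F_v}}$ split reducible, non-split reducible, or irreducible. Write $K_1\eqdef1+pM_2(\cO_{F_v})$, let $Z_1$ be its centre and $\Lambda\eqdef\FF[[K_1/Z_1]]$; this is the Iwasawa algebra of a uniform pro-$p$ group of dimension $3f$, and $\pi^\vee$ is a finitely generated $\Lambda$-module. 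By the definition recalled in \cite[Introduction]{BHHMS}, $\dim_{\GL_2(F_v)}(\pi)$ is the dimension of $\gr_{\fm}(\pi^\vee)$ as a module over the (commutative polynomial) ring $\gr_{\fm}(\Lambda)$, so the goal splits into the upper bound $\dim_{\GL_2(F_v)}(\pi)\le f$ and the lower bound $\ge f$.

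\medskip

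\noindent\textbf{Patching.} Applying the patching method of \cite{BHHMS} (in the form used there for the mod $p$ cohomology of Shimura curves) to the groups $H^1_{\text{\'et}}(X_{V^vV_v}\times_F\overline F,\FF)$ appearing in \eqref{Intro Eq pi}, and localising at the maximal ideal of the Hecke algebra attached to $\rbar$ — which is non-Eisenstein by assumption (i), so that only degree $1$ survives — I would produce a module $M_\infty$ with a smooth $\GL_2(F_v)$-action, finitely generated over $R_\infty\eqdef R_\infty^{\mathrm{loc}}[[x_1,\dots,x_h]]$, where $R_\infty^{\mathrm{loc}}$ is the completed tensor product over $W(\FF)$ of the framed deformation ring $R_{\rbar_v}^{\square}$ at $v$ with the framed deformation rings at the remaining bad places. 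Assumption (ii) and the hypotheses imposed on $V^v$ in the statement (trivial level at unramified places, and $V_w\subseteq1+pM_2(\cO_{F_w})$ normal in $\GL_2(\cO_{F_w})$ for $w\mid p$, $w\ne v$) make every local factor other than $R_{\rbar_v}^{\square}$ formally smooth, and they force a multiplicity-one statement, so that $\pi^\vee\cong M_\infty/\mathfrak a$ for the ideal $\mathfrak a\subseteq R_\infty$ cutting out $\rbar$ and the tame level. Because only one cohomological degree contributes, $M_\infty$ is a maximal Cohen--Macaulay $R_\infty$-module, and it is faithful over (a suitable union of components of) the mod $p$ fibre of $R_{\rbar_v}^{\square}$; so the $\Lambda$-module $\pi^\vee$, and in particular its Gelfand--Kirillov dimension, is governed entirely by the local situation at $v$.

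\medskip

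\noindent\textbf{The local computation and conclusion.} The core of the proof is then an explicit description, \emph{uniform in the three cases}, of the mod $p$ fibres of the potentially Barsotti--Tate framed deformation rings $R_{\rbar_v}^{\square}(\tau)$ of $\rbar_v$ for the tame inertial types $\tau$ attached to the Serre weights of $\rbar_v$ (equivalently, of the deformation rings with fixed Serre weight). Using Breuil--Kisin modules with tame descent data and the genericity bounds in (iii)--(iv), I would show that each such ring is Cohen--Macaulay of the expected Krull dimension, that after removing formally smooth factors its mod $p$ fibre is the completion of an explicit product of affine charts, and that the associated $\gr_{\fm}$ has a closed-form Hilbert series, compatibly as $\tau$ varies. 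Feeding this into the Breuil--M\'ezard-type comparison of \cite{BHHMS} (which relates $\gr_{\fm}(\pi^\vee)$ to these special fibres through the Cohen--Macaulay module $M_\infty$), one obtains that $\pi^\vee$ is a Cohen--Macaulay $\Lambda$-module whose graded support has dimension exactly $f$: the upper bound $\le f$ is, as in \cite{BHHMS}, the dimension of that special fibre at $v$ (the places $w\mid p$, $w\ne v$, contributing nothing because of the deep normal level), and the lower bound $\ge f$ follows because $M_\infty$, being maximal Cohen--Macaulay and faithful, meets the whole special fibre and no dimension is lost on reduction to $\pi^\vee$.

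\medskip

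\noindent\textbf{Main obstacle.} Essentially all the difficulty — and the whole reason a unified argument is worth having — is concentrated in the local computation: in \cite{BHHMS} the non-split reducible case and in \cite{HW} the semisimple (split reducible and irreducible) cases were treated by separate and rather involved arguments (Breuil--M\'ezard multiplicities, explicit Kisin resolutions, Demazure-type desingularisations), and handling all of them at once requires a cleaner hold on these deformation rings — plausibly a systematic use of Breuil--Kisin modules with descent data, or of the geometry of the local models / Emerton--Gee stack, to read off Cohen--Macaulayness and the exact Hilbert series in one stroke. A secondary, more technical point is the multiplicity-one input at the auxiliary places $w\mid p$, $w\ne v$: one must verify that $V_w$ normal in $\GL_2(\cO_{F_w})$ and contained in $1+pM_2(\cO_{F_w})$ suffices for $M_\infty$ to be free over the corresponding local factor, so that passing from $M_\infty$ to $\pi$ preserves the dimension count.
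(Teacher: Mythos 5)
There is a genuine gap, and it sits exactly where you put the weight of your argument: the upper bound $\dim_{\GL_2(F_v)}(\pi)\le f$. You propose to read it off from the special fibres of the potentially Barsotti--Tate deformation rings $R^{\square}_{\rbar_v}(\tau)$ (Cohen--Macaulayness, Hilbert series) via a ``Breuil--M\'ezard-type comparison'' and the maximal Cohen--Macaulayness of $M_\infty$. That mechanism does not give the upper bound: the Gelfand--Kirillov dimension of $\pi$ is not the Krull dimension of any special fibre at $v$, and the standard patching dimension count ($M_\infty$ maximal Cohen--Macaulay over $R_\infty$, $\pi^\vee$ a quotient of $M_\infty$ by a system of parameters-like ideal) only yields the \emph{lower} bound $\ge f$ --- a priori the quotient could fail to drop the dimension by the full number of generators. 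The actual upper bound in this paper (Theorem \ref{GK Thm upper}, following \cite{BHHMS} and \cite{HW}) is representation-theoretic: one must show that $\pi[\fm_{K_1}^2]$ is as small as possible, namely $\pi'[\fm_{K_1}^2]\cong\wt{D}_0(\rbar_v^\vee)^{\oplus r}$ with $\wt{D}_0(\rbar_v^\vee)$ the multiplicity-free $\wt{\Gamma}$-module of \cite{HW}, and then invoke the criterion bounding the dimension of $\gr(\pi^\vee)$. Controlling the second layer $\fm_{K_1}/\fm_{K_1}^2$ forces one to evaluate $M_\infty$ on lattices and on $\Proj_{\wt\Gamma}\sigma_v$ whose supports involve potentially crystalline deformation rings with Hodge--Tate weights $(2,-1)$ at some embeddings and $(1,0)$ elsewhere (equivalently the $\le(3,0)$ rings of Theorems \ref{Comp Thm single} and \ref{Comp Thm multi-type}), together with fine ideal-theoretic congruences between their components such as Proposition \ref{Comp Prop p} ($p\in\fq\cap\fq'+\fp$), which feed into Proposition \ref{Global Prop R} and give $\dim_\FF M_\infty(\Proj_{\wt\Gamma}\sigma_v)/\fm_\infty=r$. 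Potentially Barsotti--Tate rings alone, however precisely you describe their mod $p$ fibres, cannot see this layer, so your local computation is aimed at the wrong family of deformation rings and the upper bound does not follow.

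Your lower-bound sketch (patching, only one cohomological degree, Cohen--Macaulayness and faithfulness over the relevant components) is in the right spirit and is essentially how the paper, following \cite{BHHMS}, gets $\ge f$. Two smaller corrections: the attribution is reversed --- \cite{BHHMS} treats the case $\rbar$ semisimple at $v$ and \cite{HW} the non-semisimple (minimal) case, and the new content here is precisely extending the \cite{BHHMS}-style deformation-ring computations (Kisin modules with $\wt w$-gauges) to non-semisimple $\rhobar$; and the patched multiplicity is an integer $r\ge1$, not $1$, which is why the statements are formulated as freeness of rank $r$ and why passing from $\pi'$ to an arbitrary $\pi$ in part (iii) needs the separate argument of \cite[Cor.~8.4.6]{BHHMS} rather than a multiplicity-one identification $\pi^\vee\cong M_\infty/\fa$.
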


Theorem \ref{Intro Thm main} is proved by \cite[Thm.~1.1]{BHHMS} when $\rbar$ is semisimple at $v$ and is proved by \cite[Thm.~1.1]{HW} in the ``minimal" case (i.e.\,$\pi=\pi_v^{D}(\rbar)$, see \cite[Introduction]{HW}) when $\rbar$ is non-semisimple at $v$. So the new case of Theorem \ref{Intro Thm main} is that we allow arbitrary $\pi$ when $\rbar$ is non-semisimple at $v$. On the one hand, the method of \cite{HW} only works in the non-semisimple case. On the other hand, it turns out that the method of \cite{BHHMS} can be generalized to the non-semisimple case, and this was not noticed before. We adapt the method of \cite{BHHMS} to the non-semisimple case and give a uniform proof of Theorem \ref{Intro Thm main}, see Theorem \ref{Global Thm main}(iii). As an intermediate step, \cite{BHHMS} gives an explicit computation of some potentially crystalline deformation rings using the machinery of Kisin modules developed in \cite{LLHLM18} and \cite{LLHL19} when $\rbar_v$ is semisimple. We generalize the computation of potentially crystalline deformation rings to the non-semisimple case, see Theorem \ref{Comp Thm single} and Theorem \ref{Comp Thm multi-type}.

\subsection*{Organization of the article}

\hspace{1.5em}%
In \S\ref{Sec Pre}, we recall the preliminary notions and results on algebraic groups, tame inertial types and extension graphs. In \S\ref{Sec Kisin}, we recall the machinery of Kisin modules. In \S\ref{Sec Comp}, we use the machinery of Kisin modules to compute explicitly some potentially crystalline deformation rings. In \S\ref{Sec GK}, we recall a result which gives an upper bound for the Gelfand-Kirillov dimensions of some admissible smooth representations of $\GL_2(F_v)$ over $\FF$. In \S\ref{Sec global} we combine all the previous results and prove Theorem \ref{Intro Thm main}.

\subsection*{Notation}

\hspace{1.5em}
If $F$ is any field, we denote by $G_F\eqdef\Gal(\ovl{F}/F)$ the absolute Galois group of $F$, where $\ovl{F}$ is a fixed separable closure of $F$. If $F$ is a local field, let $I_F\subseteq G_F$ be the inertia subgroup and $W_F\subseteq G_F$ be the Weil group. We normalize Artin's reciprocity map $\Art_F:F\x\ism W_F\ab$ so that uniformizers are sent to geometric Frobenius elements. If $F$ is a number field and $v$ is a finite place of $F$, we write $F_v$ for the completion of $F$ with respect to the place $v$ and write $\cO_{F_v}$ for the ring of integers of $F_v$. We also denote by $\AA_F^{\infty}$ the set of finite ad\`eles of $F$. If $F$ is a perfect field of characteristic $p$, we denote $W(F)$ the ring of Witt vectors of $F$. For $x\in F$, we denote by $[x]\in W(F)$ the Techm\"uller lift of $x$. 

We write $\varepsilon:G_{\Qp}\to\Zp\x$ for the cyclotomic character and $\omega$ its mod $p$ reduction. We normalize Hodge--Tate weights so that $\varepsilon$ has Hodge--Tate weights $1$ at every embedding.

We let $E$ be a finite extension of $\Qp$ with ring of integers $\cO$, uniformizer $\vpi$ and residue field $\FF$. We always assume that $E$ is large enough.

We let $K$ be an unramified extension of $\Qp$ of degree $f$ with ring of integers $\cO_K$ and residue field $k$. We fix an embedding $\sigma_0:k\into\FF$ and let $\sigma_j\eqdef\sigma_0\circ\varphi^j$, where $\varphi:x\mapsto x^p$ is the arithmetic Frobenius on $k$. We identify $\cJ\eqdef\Hom(k,\FF)$ with $\ZZ/f\ZZ$ by $\sigma_j\corr j$.

For $d\in\set{1,2}$, we let $K_d$ be the unramified extension of $K$ of degree $d$ with residue field $k'$ and fix an embedding $\sigma_0':k'\into\FF$ extending $\sigma_0$. Then the Serre's fundamental character of level $df$ is defined as $\omega_{df}:I_K =I_{K_d}\xrightarrow{\Art_{K_d}\inv}\cO_{K'}\x\onto{k'}\x\xhookrightarrow{\sigma_0'}\FF\x.$

If $G$ is a group and $V$ is a representation of $G$ on a finite-dimensional $E$-vector space, we denote by $\ovl{V}$ the reduction modulo $\varpi$ of the semisimplification of a $G$-stable $\cO$-lattice in $V$. If $V$ is a representation of $G$ on a finite-dimensional vector space, we let $\JH(V)$ denote the set of Jordan--H\"older factors of $V$. If $\sigma$ is an irreducible representation of $G$, we let $[V:\sigma]$ be the multiplicity of $\sigma$ in the semisimplification of $V$.\par

For each commutative ring $A$, $a\in A\x$ and $\mu=(\mu_1,\mu_2)\in\ZZ^2$, we write $a^{\mu}$ for the matrix $\smat{a^{\mu_1}&0\\0&a^{\mu_2}}\in\GL_2(A)$. If $s\in S_2$ is a permutation, we let $\dot{s}$ denote the associated permutation matrix and let $\sgn(s)\in\set{\pm1}$ be the signature of $s$.

\section{Preliminaries}\label{Sec Pre}

\hspace{1.5em}%
In this section, we recall some notions related to algebraic groups, the inertial types and the Serre weights that we will use. We follow closely \cite[\S2]{BHHMS} and specialize to the case $n=2$.

\hspace{\fill}

Let $\un{G}$ be the algebraic group $\bbra{ \Res_{\cO_K/\Zp}{\GL_2}_{/\cO_K}}\times_{\Zp}\cO$ with diagonal maximal torus $\un{T}$ and center $\un{Z}$. There is a natural isomorphism $\un{G}\cong\prod_{\cJ}\GL_{2/\cO}$ induced by the ring homomorphism $\cO_K\otimes_{\Zp}\cO\cong\cO^\cJ$ defined by $x\otimes y\mapsto(\sigma_j(x)y)_{j\in\cJ}$.

Let $X^*(\un{T})$ be the character group of $\un{T}$ which is identified with $(\ZZ^2)^f$ via the embeddings $\sigma_j$. For each $\mu\in X^*(\un{T})$, we write $\mu=(\mu_j)_{j\in\cJ}$ with $\mu_j=(\mu_{j,1},\mu_{j,2})\in\ZZ^2$. If $(a_1,a_2)\in\ZZ^2$, we write $(\un{a_1},\un{a_2})$ to denote the element of $X^*(\un{T})$ whose corresponding tuple equals $(a_1,a_2)$ at each place $j\in\cJ$. Let $\eta\eqdef(\un{1},\un{0})$. There is an automorphism $\pi$ on $X^*(\un{T})$ defined by $\pi(\mu)_j=\mu_{j-1}$.

We define the set of \textbf{dominant weights} $X^*_+(\un{T})$, the set of \textbf{$p$-restricted weights} $X^*_1(\un{T})$, $X_{\reg}(\un{T})$, $X^0(\un{T})$, and the \textbf{lowest alcove} $\un{C}_0$ as follows:
\begin{align*}
    X_+^*(\un{T})&\eqdef\sset{\lambda\in X^*(\un{T}):\lambda_{j,1}\geq\lambda_{j,2}~\forall\,j\in\cJ};\\
    X_1(\un{T})&\eqdef\sset{\lambda\in X^*(\un{T}):0\leq\lambda_{j,1}-\lambda_{j,2}\leq p-1~\forall\,j\in\cJ};\\
    X_{\reg}(\un{T})&\eqdef\sset{\lambda\in X^*(\un{T}):0\leq\lambda_{j,1}-\lambda_{j,2}<p-1~\forall\,j\in\cJ};\\
    X^0(\un{T})&\eqdef\sset{\lambda\in X^*(\un{T}):\lambda_{j,1}=\lambda_{j,2}~\forall\,j\in\cJ};\\
    \un{C}_0&\eqdef\sset{\lambda\in X^*(\un{T})\otimes_{\ZZ}\RR\cong(\RR^2)^f:-1<\lambda_{j,1}-\lambda_{j,2}<p-1~\forall\,j\in\cJ}.
\end{align*}
Given $N\in\NNN$ and $\lambda\in\un{C}_0$, we say that $\lambda$ is \textbf{$N$-deep} in $\un{C}_0$ if $N\leq\lambda_{j,1}-\lambda_{j,2}\leq p-2-N$ for all $j\in\cJ$. In particular, the existence of an $N$-deep weight in $\un C_0$ implies $p\geq 2N+2$.

Let $\un{W}$ be the Weyl group of $(\un{G},\un{T})$ which acts on $X^*(\un{T})$. For each $w\in\un{W}$, we write $w_j\in S_2$ to denote its $j$-th component via the identification $\un{W}\cong S_2^f$. Let $\we\cong X^*(\un{T})\rtimes\un{W}$ be the extended Weyl group of $(\un{G},\un{T})$. We denote by $t_{\lambda}$ the image of $\lambda\in X^*(\un{T})$ in $\we$. Hence, an element of $\we$ can be uniquely written as $wt_{\lambda}$ with $w\in\un{W}$ and $\lambda\in X^*(\un{T})$. We identify $\we$ with $(\ZZ^2\rtimes S_2)^f$.

For $\lambda\in X^*(\un{T})$, we have the notion of \textbf{$\lambda$-admissible set} $\Admv(\lambda)$ defined as in \cite[p.14]{BHHMS}. In particular, we have
\begin{equation}\label{Pre Eq Admv}
    \Admv(t_{(\un{2},\un{1})})=\sset{\wt{w}\in\we:\wt{w}_j\in\set{t_{(2,1)},\fw t_{(2,1)},t_{(1,2)}}~\forall\,j\in\cJ},
\end{equation}
where $\fw$ denotes the unique nontrivial element of $S_2$.

\hspace{\fill}

An \textbf{inertial type} of $K$ is a representation $\tau:I_K\to\GL_n(E)$ with open kernel that can be extended to $G_K$. In this work, we let $n=2$. In this case, Henniart associates to $\tau$ a smooth irreducible $\GL_2(\OK)$-representation $\sigma(\tau)$ over $E$ (see the appendix of \cite{BM02}), normalized as in \cite[\S2.1.1]{BM02}. We say that an inertial type is \textbf{tame} if it factors through the tame inertial quotient.

For each pair $(s,\mu)\in\un{W}\times X^*(\un{T})$, we associate to it a tame inertial type $\tau(s,\mu+\eta)$ as in \cite[Def.~2.3.1]{BHHMS} and write $\ovl{\tau}(s,\mu+\eta)$ for its reduction modulo $\varpi$. For a tame inertial type $\tau$, a \textbf{lowest alcove presentation} of $\tau$ is a pair $(s,\mu)\in\un{W}\times\un{C_0}$ such that $\tau\cong\tau(s,\mu+\eta)$, and we say that $\tau$ is \textbf{$N$-generic} for some $N\in\NNN$ if it has a lowest alcove presentation $(s,\mu)$ such that $\mu$ is $N$-deep in $\un{C_0}$. As an example, let $s=(s_0,1,\ldots,1)\in\un{W}$ with $s_0\in S_2$ and $\mu=(\mu_j)_j\in X^*(\un{T})$ with $\mu_j=(r_j+m_j,m_j)\in\ZZ^2$, then we have
\begin{equation}\label{Pre Eq type}
    \tau(s,\mu+\eta)\cong
    \begin{cases}
        \begin{pmatrix}\wt{\omega}_f^{\sum_{j=0}^{f-1} (r_{j}+1)p^j}&0\\0&1\end{pmatrix}\otimes\wt{\omega}_f^{\sum_{j=0}^{f-1}m_jp^j}&\text{ if }s_0=1\\
        \begin{pmatrix}\wt{\omega}_{2f}^{\sum_{j=0}^{f-1} (r_{j}+1)p^j}&0\\0&\wt{\omega}_{2f}^{p^f\sum_{j=0}^{f-1} (r_{j}+1)p^j}\end{pmatrix}\otimes\wt{\omega}_f^{\sum_{j=0}^{f-1}m_jp^j}&\text{ if }s_0=\fw.
    \end{cases}
\end{equation}
Here, for $f'\in\set{f,2f}$, $\wt{\omega}_{f'}:I_K\to\cO\x$ is the Teichm\"{u}ller lift of $\omega_{f'}$. Moreover, $\tau(s,\mu+\eta)$ is $N$-generic if $N\leq r_j\leq p-2-N$ for all $j$.

\hspace{\fill}

A \textbf{Serre weight} of $\GL_2(k)$ is an absolutely irreducible representation of $\GL_2(k)$ over $\FF$. For each $\lambda\in X_1(\un{T})$, we define 
\begin{equation*}
    F(\lambda)\eqdef\bigotimes\limits_{j=0}^{f-1}\bbra{\bbra{\Sym^{\lambda_{j,1}-\lambda_{j,2}}k^2\otimes_k\det^{\lambda_{j,2}}}\otimes_{k,\sigma_j}\FF}.
\end{equation*}
This induces a bijection $F:X_1(\un{T})/(p-\pi)X^0(\un{T})\ism\set{\text{Serre weights of }\GL_2(k)}$. We say that a Serre weight $\sigma$ is \textbf{regular} if $\sigma\cong F(\lambda)$ with $\lambda\in X_{\reg}(\un{T})$, i.e.\,$0\leq\lambda_{j,1}-\lambda_{j,2}<p-1$ for all $j$.

Let $\Lambda_W\eqdef X^*(\un{T})/X^0(\un{T})$. It is identified with $\ZZ^f$ by the isomorphism $[(a_j,b_j)_j]\mapsto(a_j-b_j)_j$. For each $\mu\in X^*(\un{T})$, we define the extension graph 
\begin{equation*}
    \Lambda_W^{\mu}\eqdef\sset{\omega\in\Lambda_W\cong\ZZ^f:0\leq\mu_{j,1}-\mu_{j,2}+\omega_j<p-1~\forall\,j\in\cJ}.
\end{equation*}
As in \cite[p.17]{BHHMS}, there is a map
\begin{equation*}
    \ft_{\mu}:\Lambda_W^{\mu}\to X_{\reg}(\un{T})/(p-\pi)X^0(\un{T}),
\end{equation*}
such that the map $\omega\mapsto F(\ft_{\mu}(\omega))$ gives a bijection between $\Lambda_W^{\mu}$ and the set of regular Serre weights of $\GL_2(k)$ with central character $\mu|_{\un{Z}}$. As an example, suppose that $\mu=(r_j,0)\in X^*(\un{T})$ and $\omega=(2n_j+\delta_j)_j\in\Lambda_W^{\mu}$ with $n_j\in\ZZ,\delta_j\in\set{0,1}$, then we have
\begin{equation*}
    F(\ft_{\mu}(\omega))=\bbra{\bigotimes\limits_{j=0}^{f-1}\bbra{\Sym^{r'_j}k^2\otimes_{k,\sigma_j}\FF}}\otimes_\FF(\sigma_0\circ\det)^{e(\omega)}
\end{equation*}
with
\begin{align*}
    r_j'&=
    \begin{cases}
        r_j+\omega_j&\text{if}~\delta_{j+1}=0\\
        p-2-r_j-\omega_j&\text{if}~\delta_{j+1}\neq0;
    \end{cases}\\
    e(\omega)&=\frac{1}{2}\bbra{\delta_0(p^f-1)+\sum\limits_{j=0}^{f-1}(r_j-r'_j)p^j}.
\end{align*}

\section{Kisin modules}\label{Sec Kisin}

\hspace{1.5em}%
In this section, we review the machinery of Kisin modules that are used to compute the Galois deformation rings. We follow closely \cite{LLHLM18},\cite{LLHL19},\cite{LLHLM20},\cite{LLHLM} as well as \cite[\S3.1]{BHHMS}.

\hspace{\fill}

Throughout this section we fix a $1$-generic tame inertial type $\tau:I_K\to\GL_2(\cO)$ of $K$ and a lowest alcove presentation $(s,\mu)$ for $\tau$.

Let $R$ be a $p$-adically complete Noetherian local $\cO$-algebra and $h\in\NNN$, we define the category of \textbf{Kisin modules} over $R$ of $E(u')$-height $\leq h$ and type $\tau$ as in \cite[Def.~3.1.3]{LLHLM20} with the caveat that we consider modules of rank $2$ as opposed to $3$ in \loc, and denote it by $Y^{[0,h],\tau}(R)$. For each $\fM\in Y^{[0,h],\tau}(R)$, we have the notion of an \textbf{eigenbasis} $\beta$ for $\fM$, defined as in \cite[Def.~3.1.6]{LLHLM20}. For $\fM\in Y^{[0,h],\tau}(R)$ with eigenbasis $\beta$, we define the matrices $A_{\fM,\beta}\j\in\M_2(R\ddbra{v})$ for $0\leq j\leq f-1$ as in \cite[(5.4)]{LLHLM}, where $v\eqdef(u')^{p^f-1}$ if $s_0\cdots s_{f-1}=1$ and $v\eqdef(u')^{p^{2f}-1}$ otherwise. For $\lambda=(\lambda_{j,1},\lambda_{j,2})_j\in X^*_+(\un{T})$ a dominant weight such that $0\leq\lambda_{j,i}\leq h$ for all $0\leq j\leq f-1$ and $i\in\set{1,2}$, we define a subcategory $Y^{\leq\lambda,\tau}(R)$ of $Y^{[0,h],\tau}(R)$ following \cite[\S5]{CL18}. Its objects are Kisin modules $\fM\in Y^{[0,h],\tau}(R)$ such that for some (equivalently, any) eigenbasis $\beta$, we have $A_{\fM,\beta}\j\in\M_2\bbra{(v+p)^{\lambda_{j,2}}R\ddbra{v}}$ and $\det(A_{\fM,\beta}\j)\in R\ddbra{v}\x(v+p)^{\lambda_{j,1}+\lambda_{j,2}}$. In this case, we say that $\fM$ has \textbf{height $\leq\lambda$}. For simplicity, we write $Y^{\leq(a,b)}$ to denote $Y^{\leq(\un{a},\un{b})}$.

We write $\cI(\FF)$ for the Iwahori subgroup of $\GL_2\big(\FF\ddbra{v}\big)$ consisting of the matrices which are upper triangular modulo $v$. For $\ovl{\fM}\in Y^{[0,h],\tau}(\FF)$ and $\wt{w}=(\wt{w}_j)_j\in\we$, we say that $\ovl{\fM}$ has \textbf{shape} $\wt{w}$ if for some (equivalently, any) choice of eigenbasis $\ovl{\beta}$, we have $A^{(j)}_{\ovl{\fM},\ovl{\beta}}\in\cI(\FF)\wt{w}_j\cI(\FF)$ in $\GL_2\big(\FF\dbra{v}\big)$ for each $0\leq j\leq f-1$. Here we regard $\wt{w}_j$ as an element of $\GL_2\big(\FF\dbra{v}\big)$ by the assignment $s_jt_{\mu_j}\mapsto\dot{s}_jv^{\mu_j}$. 

\hspace{\fill}

The property of having a fixed shape is not an open condition, as we will see in Example \ref{Kisin Ex shape} below. Instead, we will use the notion of $\wt{w}$-gauge following \cite{LLHLM}. For simplicity of notation, the following definition is slightly different from \cite[Def.~5.2.6]{LLHLM}.

\begin{definition}\label{Kisin Def w-gauge}
    Let $\fM\in Y^{[0,h],\tau}(R)$ and $\wt{w}=(\wt{w}_j)_j\in\weee$. Write $\wt{w}_j=s_jt_{\nu_j}$ with $s_j\in S_2$ and $\nu_j\in\ZZ^2$. We say that $\fM$ has \textbf{$\wt{w}$-gauge} if it has an eigenbasis $\beta$ such that 
    \begin{enumerate}
        \item $A^{(j)}_{\fM,\beta}(v+p)^{-\nu_j}\dot{s}_j\inv\in\GL_2\bbra{R\bbbra{\frac{1}{v+p}}}$ is lower triangular modulo $\frac{1}{v+p}$;
        \item $A^{(j)}_{\fM,\beta}(v+p)^{-\nu_j}\in\GL_2\bbra{R\bbbra{\frac{1}{v+p}}}$ is upper triangular modulo $\frac{v}{v+p}$
    \end{enumerate}
    for each $0\leq j\leq f-1$. Such a $\beta$ is called a \textbf{$\wt{w}$-gauge basis}. We also say that the matrix $A^{(j)}_{\fM,\beta}$ has \textbf{$\wt{w}_j$-gauge}. 
\end{definition}

\begin{remark}
    Let $\ovl{\fM}\in Y^{[0,h],\tau}(\FF)$ and $\wt{w}\in\un{W}$. If $\ovl{\fM}$ has shape $\wt{w}$, then it has $\wt{w}$-gauge by \cite[Remark~5.2.5]{LLHLM}. In general, $\ovl{\fM}$ has a unique shape, but it could have $\wt{w}$-gauge for many choices of $\wt{w}$.
\end{remark}

\begin{example}\label{Kisin Ex shape}
    Let $\alpha,\beta\in\FF\x$, and $a\in\FF$. 
    We list the gauges and shapes of some matrices in $\GL_n\bigbra{\FF\dbra{v}}$ that will be considered in \S\ref{Sec Comp}.
\begin{figure}[H]
    \centering
    \caption{Gauges and shapes of some matrices}
    \begin{tabular}{|c|c|c|}
        \hline
        Matrix & One choice of gauge & Shape\\
        \hline   
        $\pmat{\alpha v^2&0\\av^2&\beta v}$ & $t_{(2,1)}$-gauge & $t_{(2,1)}$\\
        \hline
        $\pmat{0&\beta v\\\alpha v^2&av}$ & $\fw t_{(2,1)}$-gauge & 
        \makecell[c]{$\fw t_{(2,1)}$ if $a=0$\\~~$t_{(2,1)}$ if $a\neq 0$}\\
        \hline
        $\pmat{\alpha v&0\\0&\beta v^2}$ & $t_{(1,2)}$-gauge & $t_{(1,2)}$\\
        \hline
    \end{tabular}
\end{figure}
\end{example}

The following Proposition comes from \cite[Prop.~5.2.7]{LLHLM} which is a generalization of \cite[Thm.~4.1]{LLHLM18} and \cite[Thm.~4.16]{LLHLM18}.

\begin{proposition}\label{Kisin Prop lift w-gauge}
    Let $h\in\NNN$. Let $\mu$ be $(h+1)$-deep in $\un{C_0}$, $\fM\in Y^{[0,h],\tau}(R)$ and $\wt{w}\in\weee$. Suppose that $\fM/\varpi\fM\in Y^{[0,h],\tau}(R/\varpi R)$ has $\wt{w}$-gauge, then $\fM$ has $\wt{w}$-gauge. Moreover, its $\wt{w}$-gauge basis $\beta$ is unique up to scaling by the group $\un{T}(R)$.
\end{proposition}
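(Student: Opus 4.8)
The plan is to reduce the statement to a deformation-theoretic lifting argument for the individual matrices $A^{(j)}_{\fM,\beta}$, following the strategy of \cite[Thm.~4.1, Thm.~4.16]{LLHLM18} and its generalization \cite[Prop.~5.2.7]{LLHLM}, and to verify that the rank-$2$ case goes through verbatim. First I would fix an eigenbasis $\ovl{\beta}$ of $\ovl{\fM}\eqdef\fM/\varpi\fM$ realizing the $\wt w$-gauge, and lift it arbitrarily to an eigenbasis $\beta_0$ of $\fM$ (eigenbases always lift since the relevant obstruction lives in a free module). The point is then that the conditions (i) and (ii) of Definition \ref{Kisin Def w-gauge} cut out, inside the space of possible change-of-basis matrices $\un{T}(R)\backslash \cI(R)$-type cosets, a closed condition whose reduction mod $\varpi$ is satisfied; one wants to show this condition can be solved over $R$ by successive approximation. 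Concretely, writing $A^{(j)}_0\eqdef A^{(j)}_{\fM,\beta_0}$, I would look for matrices $U_j, U_{j+1}$ (triangular unipotent, with a torus ambiguity) so that the conjugated matrices $U_{j+1}^{-1}A^{(j)}_0\varphi(U_j)$ have $\wt w_j$-gauge; the $(h+1)$-deepness of $\mu$ is exactly what guarantees that the relevant entries of $(v+p)^{-\nu_j}$-scaled matrices lie in the correct powers of $(v+p)$ and that Hensel/successive-approximation applies degree by degree in powers of $\varpi$ (or in a suitable $(v+p)$-adic filtration).

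The key steps, in order, would be: (1) record the mod $\varpi$ normal form coming from $\wt w$-gauge and set up the Frobenius-semilinear conjugation equation for lifting it; (2) show the equation is "formally smooth" in the sense that each order-$n$ approximation extends to order $n+1$, using genericity (the $(h+1)$-deep hypothesis) to invert the relevant linear operator $X\mapsto X - \mathrm{(twist\ of\ }\varphi(X))$ on the appropriate graded pieces — this is where the combinatorics of $E(u')$-height $\le h$ and the product structure over $\cJ\cong\ZZ/f\ZZ$ enters, cycling through $j=0,\dots,f-1$; (3) pass to the limit to get a genuine $\wt w$-gauge basis $\beta$ over $R$; (4) prove uniqueness up to $\un T(R)$ by showing that any two $\wt w$-gauge bases differ by a change-of-basis matrix which, after imposing both triangularity conditions (i) and (ii) at every $j$, is forced to be diagonal with entries in $R\x$, i.e.\ lies in $\un T(R)$ — again the genericity of $\mu$ is what rules out the off-diagonal freedom.

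The main obstacle I anticipate is step (2): checking that the linear operator controlling the obstruction to lifting is invertible. In \cite{LLHLM18}, \cite{LLHLM} this is the heart of the matter and relies delicately on the deepness of $\mu$ ensuring that the "monodromy" operator $\varphi$-twist has no fixed points on the relevant subquotients; in the rank-$3$ setting of \loc\ it involves a careful case analysis over the Weyl group. For $n=2$ the Weyl group is just $S_2$, so the case analysis collapses to the three possibilities $t_{(2,1)}, \fw t_{(2,1)}, t_{(1,2)}$ illustrated in Example \ref{Kisin Ex shape}, and one should be able to do the computation by hand; but one still has to be careful that the bound $(h+1)$-deep (rather than a larger bound) really suffices, and that the argument is uniform in $R$ (not just for $R$ a field or Artinian — $R$ is only assumed $p$-adically complete Noetherian local, so one genuinely needs the successive-approximation/limit argument rather than a naive Hensel's lemma). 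I would therefore spend most of the write-up carefully isolating the linear-algebra lemma over $R\ddbra{v}$, analogous to \cite[Prop.~5.2.7]{LLHLM}, and then quoting or re-deriving it in the rank-$2$ case.
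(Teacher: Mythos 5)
The paper does not actually prove this proposition: it is imported directly from \cite[Prop.~5.2.7]{LLHLM} (itself a generalization of \cite[Thm.~4.1, Thm.~4.16]{LLHLM18}), with only the remark that Definition \ref{Kisin Def w-gauge} is a slight simplification of the definition used there. Your plan is essentially a reconstruction of the strategy of those cited proofs (lift an eigenbasis, solve the change-of-basis equation by successive approximation using the $(h+1)$-deep hypothesis, then force the ambiguity into $\un{T}(R)$), so it follows the same approach as the paper's source rather than a different route; the invertibility/genericity step you flag is indeed the heart of the cited argument, and a complete write-up would carry it out (or quote it) in the rank-$2$ setting, checking compatibility with the paper's variant of the definition of $\wt{w}$-gauge.
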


\hspace{\fill}

Let $\cO_{\cE,K}$ be the $p$-adic completion of $W(k)\ddbra{v}[1/v]$. It has a Frobenius endomorphism $\varphi$ extending the arithmetic Frobenius on $W(k)$ and such that $\varphi(v)=v^p$. Let $R$ be a complete Noetherian local $\cO$-algebra. The completed tensor product $\cO_{\cE,K}\widehat{\otimes}_{\Zp}R$ is naturally equipped with a Frobenius endomorphism $\varphi$. We write $\Phi\Mod^{\text{\'et}}(R)$ the category of \textbf{\'etale $\varphi$-modules} over $\cO_{\cE,K}\widehat{\otimes}_{\Zp}R$. Its objects are finitely generated projective $\cO_{\cE,K}\widehat{\otimes}_{\Zp}R$-modules $\cM$ together with a $\varphi$-semilinear map $\phi_{\cM}:\cM\to\cM$ such that the image of $\phi_{\cM}$ generates $\cM$ as an $\cO_{\cE,K}\widehat{\otimes}_{\Zp}R$-module.

We fix a compatible system $(p_n)_n$ of $p$-power roots of $(-p)$ in $\ovl{\Qp}$ and define $K_{\infty}\eqdef\bigcup\limits_{n\in\NN}K(p_n)$. Let $\Rep_{G_{K_{\infty}}}(R)$ denote the category of finite free $R$-modules with continuous $R$-linear actions of $G_{K_{\infty}}$. By a result of Fontaine (\cite{Fon90}), generalized by \cite{Dee01} for a version with coefficients, there is an exact anti-equivalence of categories
\begin{equation}\label{Kisin Eq Fontaine}
    \VV_K^*:\Phi\Mod^{\text{\'et}}(R)\ism\Rep_{G_{K_{\infty}}}(R).
\end{equation}

We define a functor $\varepsilon_{\tau}:Y^{[0,h],\tau}(R)\to\Phi\Mod^{\text{\'et}}(R)$ as in \cite[\S5.4]{LLHLM}. Composing it with $\VV_K^*$, we get a functor
\begin{equation}\label{Kisin Eq Tdd}
    T_{dd}^*:Y^{[0,h],\tau}(R)\to\Rep_{G_{K_{\infty}}}(R).
\end{equation}

\section{Galois deformation rings}\label{Sec Comp}

\hspace{1.5em}%
In this section, we compute some potentially crystalline Galois deformation rings explicitly. The semisimple case is already known by \cite[\S4]{BHHMS}. We combine the methods of \cite[\S4]{BHHMS} and \cite[\S3]{Le19} to deal with the non-semisimple case. The main results are Theorem \ref{Comp Thm single} and Theorem \ref{Comp Thm multi-type}.

\hspace{\fill}

Throughout this section we fix a $2$-dimensional Galois representation $\rhobar:G_K\to\GL_2(\FF)$ such that $\rhobar^{\ss}|_{I_K}\cong\taubar(s,\mu)$ (here $\rhobar^{\ss}$ means the semisimplification of $\rhobar$), where
\begin{enumerate}
    \item
    $s_j\neq1$ (hence $s_j=\fw$) if and only if $j=0$ and $\rhobar$ is irreducible;
    \item
    $\mu-\eta$ is $N$-deep in $\un{C}_0$ with $N\geq 12$.
\end{enumerate}
Twisting $\rhobar$ by a power of $\omega_f$ if necessary, we furthermore assume that $\mu_j = (r_j+2,1)\in\ZZ^2$ with $N<r_j+1<p-N$ for all $j$ so that (see (\ref{Pre Eq type}))
\begin{equation}\label{Comp Eq rhobar}
    \rhobar|_{I_K} \cong
    \begin{cases}
        \begin{pmatrix}\omega_f^{\sum_{j=0}^{f-1} (r_{j}+1)p^j}&*\\0&1\end{pmatrix}\otimes \omega&\text{ if $\rhobar$ is reducible}\\
        \begin{pmatrix}\omega_{2f}^{\sum_{j=0}^{f-1} (r_{j}+1)p^j}&0\\0&\omega_{2f}^{p^f\sum_{j=0}^{f-1} (r_{j}+1)p^j}\end{pmatrix}\otimes\omega&\text{ if $\rhobar$ is irreducible.}
    \end{cases}
\end{equation}

Let $\ovl{\cM}$ be the \'etale $\varphi$-module over $k\dbra{v}\otimes_{\Fp}\FF$ such that $\VV_K^*(\ovl{\cM})\cong\rhobar|_{G_{K_{\infty}}}$. By \cite[Prop.~3.1]{Le19}, we have a decomposition $\ovl{\cM}\cong\bigoplus\limits_{j\in\cJ}\ovl{\cM}^{(j)}$ with $\ovl{\cM}^{(j)}=\FF\dbra{v}e_1\j\oplus\FF\dbra{v}e_2\j$ such that the matrices of the Frobenius maps $\phi_{\ovl{\cM}}\j:\ovl{\cM}\j\to\ovl{\cM}\jk$ with respect to the basis $\sset{(e_1\j,e_2\j)_j}$ have the following form:
\begin{enumerate}
\item If $\rhobar$ is reducible, we can take 
\begin{equation}\label{Comp Eq etale phi}
    \Mat(\phi_{\ovl{\cM}}\jj)=\begin{pmatrix}\alpha_jv^{r_j+2}&0\\\alpha_ja_{f-1-j}v^{r_j+2}&\beta_jv\end{pmatrix}
\end{equation}
for some $\alpha_j,\beta_j\in\FF\x$ and $a_j\in\FF$. Note that whether $a_j$ equals $0$ or not is unchanged when we rescale the basis. From now on, we fix a choice of $\alpha_j,\beta_j$ and $a_j$.

\item If $\rhobar$ is irreducible, we can take 
\begin{equation}\label{Comp Eq etale irr}
    \Mat(\phi_{\ovl{\cM}}\jj)=
    \begin{cases}
        \begin{pmatrix}\alpha_jv^{r_j+2}&0\\0&\beta_jv\end{pmatrix}&~\text{if}~j\neq 0\\
        \begin{pmatrix}0&-\beta_jv\\\alpha_jv^{r_j+2}&0\end{pmatrix}&~\text{if}~j=0\\
    \end{cases}
\end{equation}
for some $\alpha_j,\beta_j\in\FF\x$. We fix a choice of $\alpha_j,\beta_j$, and define  $a_j\eqdef0$ for all $j$.
\end{enumerate}
In particular, we see that $\rhobar$ is semisimple if and only if $(a_0,\ldots,a_{f-1})=(0,\ldots,0)$. We denote by $W(\rhobar)$ the set of \textbf{Serre weights of $\rhobar$} defined in \cite[\S3]{BDJ10}. In both cases, by \cite[Prop.~3.2]{Le19} it can be described as
\begin{equation}\label{Comp Eq SW}
    W(\rhobar)=\sset{F(\ft_{\mu-\eta}(b_0,\ldots,b_{f-1})):b_j\in\set{0,\sgn(s_j)}~\text{if}~a_{f-1-j}=0~\text{and}~b_j=0~\text{if}~a_{f-1-j}\neq 0
    }.
\end{equation}

\hspace{\fill}

For each $\wt{w}\in\we$, we define $\wt{w}^*\in\we$ by the formula $((s't_{\mu'})^*)_j\eqdef t_{\mu'_{f-1-j}}s_{f-1-j}^{\prime-1}$ for $s'\in\un{W}$ and $\mu'\in X^*(\un{T})$. Now for each $\wt{w}\in\Admv(t_{(\un{2},\un{1})})=\sset{t_{(2,1)},\fw t_{(2,1)},t_{(1,2)}}^f$ (see (\ref{Pre Eq Admv})), we write $\wt{w}^*=t_\nu w$ for some unique $(w,\nu)\in \un{W}\times X^*(\un{T})$. Then we define the tame inertial type 
\begin{equation*}
    \tauw\eqdef\tau(sw^{-1},\mu-sw^{-1}(\nu))
\end{equation*}
with lowest alcove presentation $(s(\tau),\mu(\tau))\eqdef(sw^{-1},\mu-sw^{-1}(\nu)-\eta)$. In particular, $\tauw$ is $(N-1)$-generic. Explicitly, $s(\tau)_j=w_{j}^{-1}$ except when $j=0$ and $\rhobar$ is irreducible, in which case we have $s(\tau)_0=\fw w_{0}^{-1}$. Also we have
\begin{equation}\label{Comp Eq mu-tau}
    \mu(\tau)_j+\eta_j=
    \begin{cases}
        (r_j,0)&\text{if}~(t_{\nu_j}w_j,s_j)\in\set{(t_{(2,1)},1),\, (t_{(2,1)}\fw,\fw), (t_{(1,2)},\fw)}\\
        (r_j+1,-1)&\text{if}~(t_{\nu_j}w_j,s_j)\in\set{(t_{(2,1)},\fw),\, (t_{(2,1)}\fw ,1), (t_{(1,2)},1)}.
    \end{cases}
\end{equation}

We let $\theta:W(\rhobar)\hookrightarrow\sset{t_{(2,1)},\, t_{(1,2)}}^f$ be the map defined as in \cite[Lemma~4.1.2]{BHHMS} (since a non-semisimple $\rhobar$ has less Serre weights than a semisimple one, $\theta$ is an injection rather than a bijection). By the proof of \loc, it can be defined explicitly as follows: If $\sigma=F(\ft_{\mu-\eta}(b_0,\ldots,b_{f-1}))$ as in (\ref{Comp Eq SW}), then $\theta(\sigma)_{f-1-j}=t_{(1,2)}$ if $b_j=0$ and $\theta(\sigma)_{f-1-j}=t_{(2,1)}$ if $b_j\neq0$. For each $\sigma\in W(\rhobar)$, we define
\begin{equation*}
    X(\sigma)\eqdef\set{\wt{w}\in\Admv(t_{(\un 2,\un{1})}):\wt{w}_j\neq\theta(\sigma)_j~\forall\,j}.
\end{equation*}
We also define
\begin{equation*}
    X(\rhobar)\eqdef\bigcup\limits_{\sigma\in W(\rhobar)}X(\sigma)=\sset{\wt{w}\in\Admv(t_{(\un{2},\un{1})}):\wt{w}_{f-1-j}\neq t_{(1,2)}~\text{if}~a_{f-1-j}\neq 0}.
\end{equation*}
By (\ref{Comp Eq SW}), \cite[Lemma~4.1.2]{BHHMS} and the definition of $\theta$, one can easily check that
\begin{equation*}
    X(\rhobar)=\sset{\wt{w}\in\Admv(t_{(\un{2},\un{1})}):\JH\bbra{\ovl{(\sigma(\tau_{\wt{w}})}\otimes_{\FF}(N_{k|\Fp}\circ\det)}\cap W(\rhobar)\neq\emptyset}.
\end{equation*}

\begin{lemma}\label{Comp Lem exist Kisin}
    Let $\wt{w}\in X(\rhobar)$. Up to isomorphism there exists a unique Kisin module $\mbar\in Y^{\leq(2,1),\tauw}(\FF)\subseteq Y^{\leq(3,0),\tauw}(\FF)$ such that $T_{dd}^*(\mbar)\cong\rhobar|_{G_{K_{\infty}}}$ (see (\ref{Kisin Eq Tdd}) for $T_{dd}^*$).
\end{lemma}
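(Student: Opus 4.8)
The plan is to translate the claim into a statement about \'etale $\varphi$-modules via the functor $T_{dd}^*$ and its relation to $\ovl{\cM}$, and then produce $\mbar$ by an explicit local change of basis putting $\Mat(\phi_{\ovl{\cM}}\jj)$ into the normal forms listed in Example \ref{Kisin Ex shape}. First I would recall that, by the construction of $\varepsilon_\tau$ and $\VV_K^*$, giving $\mbar\in Y^{\leq(2,1),\tauw}(\FF)$ with $T_{dd}^*(\mbar)\cong\rhobar|_{G_{K_\infty}}$ is the same as giving a $\varphi$-stable $\FF\ddbra{v}$-lattice $\mbar\subseteq\ovl{\cM}$ (compatible with the type $\tauw$, i.e.\ respecting the $\wt\omega$-isotypic structure that defines the category $Y^{[0,h],\tauw}$) whose Frobenius matrices $A^{(j)}_{\mbar,\beta}$ lie in the height-$\leq(2,1)$ condition. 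So the proof has two halves: (i) existence of such a lattice in the right shape, and (ii) uniqueness up to isomorphism.

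For existence, I would start from the bases $(e_1\j,e_2\j)_j$ of $\ovl{\cM}\j$ of \eqref{Comp Eq etale phi} (reducible case) and \eqref{Comp Eq etale irr} (irreducible case), which already present $\Mat(\phi_{\ovl{\cM}}\jj)$ (up to the normalizing scalars $\alpha_j,\beta_j$ and, in the reducible case, the parameter $a_j$) exactly as the matrices in the three rows of the table in Example \ref{Kisin Ex shape}, after multiplying by suitable powers of $v$ coming from the lowest alcove presentation $(s(\tau),\mu(\tau))$ of $\tauw$; compare \eqref{Comp Eq mu-tau}. Concretely, for each $j$ one of the three matrices $\smat{\alpha v^2&0\\av^2&\beta v}$, $\smat{0&\beta v\\\alpha v^2&av}$, $\smat{\alpha v&0\\0&\beta v^2}$ appears according to whether $\wt w_j$ equals $t_{(2,1)}$, $\fw t_{(2,1)}$, or $t_{(1,2)}$, and the hypothesis $\wt w\in X(\rhobar)$ guarantees that whenever $a_{f-1-j}\neq 0$ we are not in the row with shape possibly $t_{(1,2)}$, which is exactly what keeps the lattice $\varphi$-stable and of height $\leq(2,1)$ (the middle row has a genuine entry $av$ in the lower-left, so it still lies in $Y^{\leq(2,1)}$ but only when the type $\tauw$ is chosen as above). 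I would then define $\mbar^{(j)}\eqdef\FF\ddbra{v}e_1\j\oplus\FF\ddbra{v}e_2\j$ and check directly that $\varphi(\mbar)\subseteq\mbar$, that $\mbar$ has the correct type $\tauw$ (using that $\tauw$ was built precisely so that twisting $\ovl{\cM}$ by the appropriate power of $v$ lands the Frobenius in $\M_2(\FF\ddbra{v})$ with the monodromy condition of \cite[Def.~3.1.3]{LLHLM20} satisfied), and that it has height $\leq(2,1)$ by reading off $A^{(j)}_{\mbar,\beta}\in\M_2((v+p)^{\mu(\tau)_{j,2}}\FF\ddbra{v})$ and the determinant condition from \eqref{Comp Eq mu-tau}; note $Y^{\leq(2,1)}\subseteq Y^{\leq(3,0)}$ is immediate since $(2,1)\leq(3,0)$ as dominant weights.

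For uniqueness, the key input is that $\ovl{\cM}$ itself is canonically attached to $\rhobar|_{G_{K_\infty}}$ by the anti-equivalence $\VV_K^*$, so two Kisin modules $\mbar,\mbar'$ as in the statement give two $\varphi$-stable lattices of type $\tauw$ and height $\leq(2,1)$ inside the \emph{same} $\ovl{\cM}$, and I must show they coincide up to the $\GL_2$-action, i.e.\ that such a lattice is unique. I would argue this by analyzing the shape: by the remark after Definition \ref{Kisin Def w-gauge} a mod-$\varpi$ Kisin module has a unique shape, and the height-$\leq(2,1)$ and type constraints, combined with the explicit form of $\phi_{\ovl{\cM}}$, force the shape to be the $\wt w$ attached to the given data (or, in the middle-row case with $a\neq0$, the degenerated shape $t_{(2,1)}$, which is still determined); then a standard saturation/maximality argument — any $\varphi$-stable lattice of bounded height inside $\ovl{\cM}$ with prescribed type is squeezed between two explicit lattices differing by a bounded power of $v$, and the height bound pins it down — gives uniqueness, after which an eigenbasis is unique up to $\un T(\FF)$ as in Proposition \ref{Kisin Prop lift w-gauge}. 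The main obstacle I anticipate is precisely this uniqueness step in the non-semisimple case: when some $a_{f-1-j}\neq0$ the relevant Frobenius matrix $\smat{0&\beta v\\\alpha v^2&av}$ has shape $t_{(2,1)}$ rather than $\fw t_{(2,1)}$, so the naive ``shape determines lattice'' heuristic from the semisimple case of \cite[\S4]{BHHMS} breaks, and one genuinely needs the finer gauge/height bookkeeping of \cite[\S3]{Le19} to see that the off-diagonal term $av$ does not create extra lattices — this is the place where the hypotheses $\wt w\in X(\rhobar)$ and $N\geq 12$ (deep enough to apply the lifting/uniqueness machinery) are really used.
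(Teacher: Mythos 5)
Your existence argument is essentially the paper's: you build $\mbar$ by twisting the explicit Frobenius matrices of $\ovl{\cM}$ from (\ref{Comp Eq etale phi})/(\ref{Comp Eq etale irr}) by $v^{-(\mu(\tau)_j+\eta_j)}\dot{s}(\tau)_j$, observe that the hypothesis $\wt{w}\in X(\rhobar)$ forces $a_{f-1-j}=0$ whenever $\wt{w}_{f-1-j}=t_{(1,2)}$, and read off membership in $Y^{\leq(2,1),\tauw}(\FF)$ and the compatibility $T_{dd}^*(\mbar)\cong\rhobar|_{G_{K_\infty}}$ from Example \ref{Kisin Ex shape} and the normalization of $\varepsilon_{\tauw}$ (the paper does exactly this, quoting \cite[Prop.~5.4]{CL18} and \cite[Prop.~3.2.1]{LLHLM20}). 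That half is fine.

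The gap is the uniqueness step, which is the only non-formal part of the lemma. Your proposed argument --- two such Kisin modules give two $\varphi$-stable lattices of bounded height in the same $\ovl{\cM}$, the shape is ``forced'', and then ``a standard saturation/maximality argument'' squeezes the lattice --- is a heuristic, not a proof: uniqueness of a bounded-height Kisin lattice with descent data inside a given \'etale $\varphi$-module genuinely fails without a genericity hypothesis on the type, so no type-free lattice-squeezing argument can work, and you never actually produce the two bounding lattices or explain why the height and type pin the lattice between them. Moreover, the shape is an invariant of a single Kisin module and does not by itself compare two different lattices, and Proposition \ref{Kisin Prop lift w-gauge} concerns uniqueness (up to $\un{T}(R)$) of a $\wt{w}$-gauge basis of a \emph{given} deformation, not uniqueness of the mod-$p$ Kisin module, so invoking it here does not help. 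The paper closes this step with a single citation: uniqueness is \cite[Prop.~3.2.18]{LLHL19}, applicable because $\mbar$ has height in $[0,2]$ and $\tauw$ is $(N-1)$-generic with $N-1\geq 3$ (this is where the depth hypothesis on $\mu$ enters, not through any ``non-semisimple versus semisimple'' subtlety --- the degeneration of the shape in the middle row plays no role in uniqueness). To repair your proof you should replace the saturation sketch by this input, or reprove its content, which is substantially more work than the lemma itself.
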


\begin{proof}
    We concentrate on the case that $\rhobar$ is reducible. The irreducible case is similar and can also be treated as in \cite[Lemma~4.1.1]{BHHMS}. Define a Kisin module $\mbar$ over $\FF$ of type $\tauw$ by imposing the matrices of the partial Frobenius maps to be  $\ovl{A}\jj=\Mat(\phi_{\ovl{\cM}}\jj)v^{-(\mu(\tau)_j+\eta_j)}\dot{s}(\tau)_j\in\GL_2(\FF\dbra{v})$, where $\Mat(\phi_{\ovl{\cM}}\jj)\in\M_2(\FF\ddbra{v})$ is the matrix in (\ref{Comp Eq etale phi}), and $s(\tau)$, $\mu(\tau)$ are computed in (\ref{Comp Eq mu-tau}). Explicitly, we have
    \begin{equation}\label{Comp Eq matrix Abar}
    \ovl{A}\jj=
    \begin{cases}
        \pmat{\alpha_jv^2&0\\\alpha_ja_{f-1-j}v^2&\beta_jv}&\text{if}~\wt{w}_{f-1-j}=t_{(2,1)}\\
        \pmat{0&\alpha_jv\\\beta_jv^2&\alpha_ja_{f-1-j}v}&\text{if}~\wt{w}_{f-1-j}=\fw t_{(2,1)}\\
        \pmat{\alpha_jv&0\\0&\beta_jv^2}&\text{if}~\wt{w}_{f-1-j}=t_{(1,2)}.
    \end{cases}
    \end{equation}
    Here we remark that for $\wt{w}\in X(\rhobar)$, we necessarily have $a_{f-1-j}=0$ if $\wt{w}_{f-1-j}=t_{(1,2)}$. By Example \ref{Kisin Ex shape}, the matrix $\ovl{A}\jj$ belongs to $\M_2\bigbra{\FF\ddbra{v}}$ and has shape contained in $\Admv(t_{(2,1)})$ for all $j$, hence $\mbar\in Y^{\leq(2,1),\tau_{\wt{w}}}(\FF)\subseteq Y^{\leq(3,0),\tau_{\wt{w}}}(\FF)$ by Proposition \cite[Prop.~5.4]{CL18}. Moreover, we remark that $\mbar$ has $\wt{w}$-gauge by Example \ref{Kisin Ex shape}.
    
    Now by \cite[Prop.~3.2.1]{LLHLM20}, the matrices of the Frobenius maps of the associated \'etale $\varphi$-module $\varepsilon_{\tauw}(\fM)$ (see \S\ref{Sec Kisin} for $\varepsilon_{\tauw}$) with respect to some basis $\ff$ is given by
    \begin{equation*}  
        \Mat_{\ff}(\phi\jj)=\ovl{A}\jj \dot{s}(\tau)_j^{-1}v^{\mu(\tau)_j+\eta_j}.
    \end{equation*}
    This is the same matrix as in (\ref{Comp Eq etale phi}) by the definition of $A\jj$, hence $T_{dd}^*(\mbar)\cong\rhobar|_{G_{K_{\infty}}}$.
    
    The uniqueness of $\mbar$ follows from \cite[Prop.~3.2.18]{LLHL19}, since $\mbar$ has height in $[0,2]$, $\tauw$ is $(N-1)$-generic and $N-1\geq3$.
\end{proof}

\hspace{\fill}

For each $\wt{w}\in X(\rhobar)$, let $R^{\leq(3,0),\tauw}_{\rhobar}$ denote the maximal reduced, $\cO$-flat quotient of the universal framed deformation ring $R^{\square}_{\rhobar}$ that parametrizes potentially crystalline lifts of $\rhobar$ of Hodge--Tate weights $(3,0)$ or $(2,1)$ in each embedding and tame inertial type $\tauw$ (its existence follows from \cite{Kis08}). For each dominant weight $\lambda\in X_{+}^*(\un{T})$, let $R^{\lambda,\tauw}_{\rhobar}$ denote the maximal reduced, $\cO$-flat quotient of $R^{\square}_{\rhobar}$
that parametrizes potentially crystalline lifts of $\rhobar$ of Hodge--Tate weights $\lambda_j$ in the $j$-th embedding for all $j$ and tame inertial type $\tauw$.

The following result is a generalization of \cite[Prop.~4.2.1]{BHHMS} (where $\rhobar$ was assumed to be semisimple). Here we observe that the Tables 1-3 of \loc\,still work in the non-semisimple case, with the following modification: we have $\ovl{A}\jj=\pmat{\ovl{e_{11}^*}v^2&0\\\ovl{d_{21}}v^2&\ovl{d_{22}^*}v}$ for Table 1 and $\ovl{A}\jj=\pmat{0&\ovl{d_{12}^*}v\\\ovl{d_{21}^*}v^2&\ovl{d_{22}}v}$ for Table 2 (see (\ref{Comp Eq matrix Abar})). Here the element $\ovl{d_{21}}$ of Table 1 (resp.\,$\ovl{d_{22}}$ of Table 2) equals to $0$ if $a_{f-1-j}=0$ and belongs of $\FF\x$ if $a_{f-1-j}\neq0$. We also replace the variable $d_{21}$ in row 3 of Table 1 (resp.\,$d_{22}$ in row 3 of Table 2) with $x_{21}\eqdef d_{21}-[\ovl{d_{21}}]$ (resp.\,$x_{22}\eqdef d_{22}-[\ovl{d_{22}}]$). For ease of presentation, when we quote the tables of \cite{BHHMS}, we omit the reference.

\begin{theorem}\label{Comp Thm single}
    Let $\wt{w}\in X(\rhobar)$. We have an isomorphism
    \begin{equation}\label{Comp Eq Ism single}
        R^{\leq(3,0), \tauw}_{\rhobar}\ddbra{X_1,\dots,X_{2f}}\cong\bbra{R^{\tauw}\Big/\sum\limits_jI^{(j)}}\ddbra{Y_1,\dots,Y_4},
    \end{equation}
    where $R^{\tauw}\eqdef\wh{\bigotimes}_{\cO,0\leq j\leq f-1}R^{(j)}$, and where the rings $R^{(j)}$ and the ideals $I^{(j)}$ of $R^{\tauw}$ are found in Tables 1-3. The irreducible components of $\Spec R^{\leq(3,0), \tauw}_{\rhobar}$ are given by $\Spec R^{\lambda, \tauw}_{\rhobar}$, where $\lambda =(\lambda_j)\in\set{(3,0),(2,1)}^f$.
    
    More precisely, via the isomorphism (\ref{Comp Eq Ism single}), for any choice of $\lambda = (\lambda_j)\in\set{(3,0),(2,1)}^f$ the kernel of the natural surjection $R^{\leq(3,0), \tauw}_{\rhobar}\ddbra{X_1,\dots,X_{2f}}\onto R^{\lambda, \tauw}_{\rhobar}\ddbra{X_1,\dots,X_{2f}}$ is generated by the prime ideal $\sum_{j=0}^{f-1}\fp^{(j),\lambda_{f-1-j}}$ of $R^{\tauw}$, where the ideals $\fp^{(j),\lambda_{f-1-j}}$ of $R^{\tauw}$ are found in Tables 1-3.
    
    Moreover, the special fiber of each $\Spec R^{\lambda, \tauw}_{\rhobar}$ is reduced.
\end{theorem}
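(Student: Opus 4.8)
The strategy is to reduce the statement to an explicit computation of the deformation ring via Kisin modules with $\wt{w}$-gauge, following the blueprint of \cite[\S4.2]{BHHMS}, and then to read off the structure of the ring and its special fiber from Tables 1--3. First I would use Lemma \ref{Comp Lem exist Kisin}: since $\wt{w}\in X(\rhobar)$, there is a unique $\mbar\in Y^{\leq(2,1),\tauw}(\FF)$ with $T_{dd}^*(\mbar)\cong\rhobar|_{G_{K_\infty}}$, and $\mbar$ has $\wt{w}$-gauge. By Proposition \ref{Kisin Prop lift w-gauge} (applicable because $\mu(\tau)$ is $(N-1)$-deep and $N-1\geq h+1=3$ when $h=2$), every deformation $\fM$ of $\mbar$ in $Y^{\leq(3,0),\tauw}(R)$ lifting to a framed deformation of $\rhobar$ has a $\wt{w}$-gauge basis, unique up to $\un{T}(R)$-scaling. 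This rigidifies the moduli problem: one obtains a versal family of Kisin modules whose partial Frobenius matrices $A^{(j)}_{\fM,\beta}$ are given in the normal form of \cite[Tables 1--3]{BHHMS}, with the single modification indicated before the theorem statement (the entry $\ovl{d_{21}}$, resp.\ $\ovl{d_{22}}$, is forced to be $0$ or a unit according to whether $a_{f-1-j}=0$), and the variable $d_{21}$ (resp.\ $d_{22}$) replaced by $x_{21}=d_{21}-[\ovl{d_{21}}]$ (resp.\ $x_{22}=d_{22}-[\ovl{d_{22}}]$) to recenter at the Teichmüller lift.

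\emph{Main computation.} Once the normal form is fixed, the height conditions defining $Y^{\leq(3,0),\tauw}$ (namely $A^{(j)}\in\M_2((v+p)^{\lambda_{j,2}}R\ddbra{v})$ and $\det A^{(j)}\in R\ddbra{v}^\times(v+p)^{\lambda_{j,1}+\lambda_{j,2}}$ for $\lambda_j\in\{(3,0),(2,1)\}$) translate into polynomial equations in the matrix entries. Expanding these in powers of $v$ and $(v+p)$ and collecting coefficients gives, for each $j$, the ring $R^{(j)}$ together with the ideal $I^{(j)}$, exactly as in \loc; the local-to-global step is the observation that the full deformation ring is the completed tensor product $R^{\tauw}=\wh{\bigotimes}_{\cO,j}R^{(j)}$ modulo $\sum_j I^{(j)}$, with the $4+2f$ extra formal variables $X_i,Y_i$ absorbing the $\un{T}(R)$-ambiguity in the gauge basis and the choice of eigenbasis versus framing. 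The identification of $\Spec R^{\leq(3,0),\tauw}_{\rhobar}$ with the union of the $\Spec R^{\lambda,\tauw}_{\rhobar}$ for $\lambda\in\{(3,0),(2,1)\}^f$, and the fact that the prime $\sum_j\fp^{(j),\lambda_{f-1-j}}$ cuts out each component, follows from the corresponding statement for the $R^{(j)}$: each $R^{(j)}$ is a domain after quotienting by $I^{(j)}$, with exactly two minimal primes $\fp^{(j),(3,0)}$ and $\fp^{(j),(2,1)}$ (this is where one checks, case by case over the three possible values of $\wt{w}_{f-1-j}$, that the modification by $\ovl{d_{21}}$ or $\ovl{d_{22}}$ being a unit does not merge or destroy components).

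\emph{Reducedness of the special fiber.} For the last assertion I would argue component by component: the special fiber of $R^{\lambda,\tauw}_{\rhobar}$ is $(R^{\tauw}/(\varpi,\sum_j\fp^{(j),\lambda_{f-1-j}}))\ddbra{\cdots}$, and since completed tensor product over $\cO$ reduces mod $\varpi$ to completed tensor product over $\FF$, it suffices to show each $R^{(j)}/(\varpi,\fp^{(j),\lambda_{f-1-j}})$ is reduced (indeed regular, or at worst a reduced complete intersection visibly cut out by a regular sequence). This is a direct inspection of Tables 1--3 in each of the three shapes; the new feature, that some off-diagonal entry is a nonzero constant rather than a free variable, if anything simplifies the special fiber rather than complicating it. Reducedness of the special fiber of the whole $R^{\leq(3,0),\tauw}_{\rhobar}$ then follows because its special fiber is the union of the special fibers of the components along loci that one checks are again reduced (or by invoking that a flat $\cO$-algebra with reduced special fiber and generic fiber is reduced, combined with the component description).

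\emph{Expected obstacle.} The genuinely new work, compared to \cite{BHHMS}, is verifying that the non-semisimplicity — encoded in the entries $a_{f-1-j}\in\FF^\times$ appearing linearly in $\ovl{A}^{(f-1-j)}$ of \eqref{Comp Eq matrix Abar} — does not interfere with the structure of the individual $R^{(j)}$: one must recheck, in the shape $\wt{w}_{f-1-j}\in\{t_{(2,1)},\fw t_{(2,1)}\}$ rows of Tables 1 and 2, that after the substitution $d_{21}\mapsto x_{21}+[\ovl{d_{21}}]$ (resp.\ $d_{22}\mapsto x_{22}+[\ovl{d_{22}}]$) the defining equations of $R^{(j)}$, the ideal $I^{(j)}$, and the primes $\fp^{(j),(3,0)},\fp^{(j),(2,1)}$ retain the same qualitative form (two components, each with reduced special fiber) — the point being that the Teichmüller constant $[\ovl{d_{21}}]$ is a unit, so it only shifts and rescales and never causes a degeneration. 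I expect this case analysis over the $\le 3$ local shapes, while mechanical, to be the crux; everything else is a faithful transcription of the semisimple argument.
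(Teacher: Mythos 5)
Your overall skeleton---rigidify the problem via Lemma \ref{Comp Lem exist Kisin} and Proposition \ref{Kisin Prop lift w-gauge}, put the universal matrices in the normal form of row 1 of Tables 1--3 with the Teichm\"uller recentering $x_{21}=d_{21}-[\ovl{d_{21}}]$ (resp.\ $x_{22}=d_{22}-[\ovl{d_{22}}]$), then identify components, primes and the special fiber by inspection---is indeed the paper's route, and your ``expected obstacle'' correctly isolates the new non-semisimple feature. But there is a genuine gap in where you claim the ideals $I^{(j)}$ come from. The height conditions defining $Y^{\leq(3,0),\tauw}$ only cut out the moduli of Kisin modules of bounded height; the potentially crystalline deformation ring is cut out \emph{in addition} by the monodromy condition (the ideals $I^{(j),\nabla}$ in the notation of \cite{BHHMS} and \cite{LLHLM18}), which encodes that the Kisin module comes from a potentially crystalline $G_K$-representation with the prescribed Hodge--Tate weights, after which one must pass to the maximal reduced $\cO$-flat quotient. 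It is the leading-order analysis of this monodromy condition, using the genericity of $\tauw$, that produces the ideals $I^{(j)}$ of Tables 1--3, and it is exactly there---equation (25) of \cite[Prop.~4.2.1]{BHHMS}---that the hypothesis $\wt{w}\in X(\rhobar)$ enters in the non-semisimple case; your computation never uses this hypothesis, which signals that the equations you describe (height conditions alone) would give the wrong ring and the wrong components.

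A second, related omission: the passage from the rigidified Kisin-module moduli, which through $T^*_{dd}$ only sees $\rhobar|_{G_{K_\infty}}$, to the framed potentially crystalline deformation ring of the $G_K$-representation $\rhobar$ is not a formal ``local-to-global observation''. In the paper it is obtained from the diagram (5.9) of \cite{LLHLM18} and the arguments of \cite[Thm.~5.12, Cor.~5.13]{LLHLM18}, with the formal smoothness of the forget-the-$\wt{w}$-gauge-basis map supplied by Proposition \ref{Kisin Prop lift w-gauge}; this is what yields the isomorphism (\ref{Comp Eq Ism single}), the $2f$ variables $X_i$ coming from the $\un{T}(R)$-ambiguity of the gauge basis and the $4$ variables $Y_i$ from the framing. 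Finally, a small slip: $R^{(j)}/I^{(j)}$ cannot be ``a domain with exactly two minimal primes''; what you need (and what the tables give) is that it has exactly the two minimal primes $\fp^{(j),(3,0)}$ and $\fp^{(j),(2,1)}$.
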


\begin{proof}
    We follow the proof of \cite[Prop.~4.2.1]{BHHMS} and use without comment the notation of \loc.
    
    Let $\ovl{\fM}\in Y^{\leq(2,1)}$ such that $T_{dd}^*(\ovl{\fM})\cong\rhobar|_{G_{K_{\infty}}}$ as in Lemma \ref{Comp Lem exist Kisin}. By (\ref{Comp Eq matrix Abar}) and Example \ref{Kisin Ex shape} the eigenbasis $\betabar$ of $\mbar$ is a $\wt{w}$-gauge basis in the sense of Definition \ref{Kisin Def w-gauge}. We modify the definition of $D^{\leq(3,0),\tau}_{\mbar,\betabar}(R)$ appearing in the proof of \cite[Prop.~4.2.1]{BHHMS} by requiring $\beta$ to be a $\wt{w}$-gauge basis instead of a gauge basis. Then by Definition \ref{Kisin Def w-gauge}, for any lift $(\fM,\beta,\jmath)\in D^{\leq(3,0),\tau}_{\mbar,\betabar}(R)$ the corresponding matrices $A\jj$ are given in row $1$ of Tables 1-3, where the entries $c_{11}\j,c_{12}\j,\dots$ are in $R$ satisfying $A\jj\mod\fm_R$ equals $\ovl{A}\jj$. Here we remark that the matrix $A\jj$ in row $1$ of Table 2 has $\fw t_{(2,1)}$-gauge but has shape $t_{(2,1)}$ if $a_{f-1-j}\neq 0$. The determination of the rest the tables are then completely analogous to that  of \cite[Prop.~4.2.1]{BHHMS}.
    
    We have a similar diagram as (5.9) of \cite{LLHLM18}. The vertical map labelled ``f.s." in $\loc$ now corresponds to forgetting the $\wt{w}$-gauge basis on the Kisin modules, and is still formally smooth by Proposition \ref{Kisin Prop lift w-gauge}. Let  $R^{\leq(3,0),\tau,\nabla}_{\ovl{\fM},\ovl{\beta}}$ be the maximal reduced and $\cO$-flat quotient of the ring $R^{\leq(3,0),\tau}_{\ovl{\fM},\ovl{\beta}}/\sum_j(I^{(j),\leq (3,0)}+I^{(j),\nabla})$ as in the proof of \cite[Prop.~4.2.1]{BHHMS}. Then the argument of \cite[Thm.~5.12]{LLHLM18} and \cite[Cor.~5.13]{LLHLM18} goes through and gives an isomorphism
    \begin{equation*}
        R^{\leq(3,0), \tau}_{\rhobar}\ddbra{X_1,\dots,X_{2f}}\cong R^{\leq(3,0),\tau,\nabla}_{\ovl{\fM},\ovl{\beta}}\ddbra{Y_1,\dots,Y_4}.
    \end{equation*}
    
    The rest of the proof and the computations are completely analogous to those of \cite[Prop.~4.2.1]{BHHMS}. Here we notice that the equation (25) in \cite[Prop.~4.2.1]{BHHMS} is guaranteed by the assumption $\wt{w}\in X(\rhobar)$. Finally, the proof of the last statement is completely analogous to that of \cite[Cor.~4.2.6]{BHHMS}.
\end{proof}

\hspace{\fill}

For each $\sigma\in W(\rhobar)$, let $R^{\leq(3,0),\sigma}_{\rhobar}$ denote the maximal reduced, $\cO$-flat quotient of $R^{\square}_{\rhobar}$ that parametrizes potentially crystalline lifts of $\rhobar$ of Hodge--Tate weights $\leq(3,0)$ in each embedding and tame inertial type $\tau$ with $\tau\in X(\sigma)$. This is also the flat closure of $\bigcup\limits_{\wt{w}\in X(\sigma)}\Spec R^{\leq(3,0),\tauw}_{\rhobar}[1/p]$ inside $\Spec R^{\square}_{\rhobar}$. 

We define a bijection $i:\Admv(t_{(\un{2},\un{1})})\to\set{1,2,3}^f$ by letting $i(\wt{w})$ be the $f$-tuple given by
\begin{align*}
    i(\wt{w})_j\eqdef
    \begin{cases}
        1&\text{if}~\wt{w}_j=t_{(2,1)}\\
        2&\text{if}~\wt{w}_j=\fw t_{(2,1)}\\
        3&\text{if}~\wt{w}_j=t_{(1,2)}.
    \end{cases}
\end{align*}
They will be the indices of ideals.

The following result is a generalization of \cite[Prop.~4.3.1]{BHHMS} (where $\rhobar$ was assumed to be semisimple). Here we observe that the Tables 4-5 (of \loc) still work in the non-semisimple case, with the modification that we replace the variables $d_{22}$ in row 3 of Tables 4 and 5 with $x_{22}\eqdef d_{22}-[\ovl{d_{22}}]$.

\begin{theorem}\label{Comp Thm multi-type}
    We have an isomorphism
    \begin{equation}\label{Comp Eq ism multi}
        R^{\leq(3,0),\sigma}_{\rhobar}\ddbra{X_1,\dots,X_{2f}}\cong\bbra{S/\bigcap\limits_{\wt{w}\in X(\sigma)}\sum\limits_{j}I^{(j)}_{\wt{w}}}\ddbra{Y_1,\dots,Y_4},
    \end{equation}
    where $S\eqdef\widehat{\bigotimes}_{\cO,0\leq j\leq f-1}S^{(j)}$, and where the ring $S^{(j)}$ and the ideals $I^{(j)}_{\wt{w}}$ of $S$ are as in Table 4 if $\theta(\sigma)_{f-1-j}=t_{(1,2)}$, whereas the ring $S^{(j)}$ and the ideals $I^{(j)}_{\wt{w}}$ of $S$ are as in Table 5 if $\theta(\sigma)_{f-1-j}=t_{(2,1)}$. The irreducible components of $\Spec R^{\leq(3,0),\sigma}_{\rhobar}$ are given by $\Spec R^{\lambda, \tau_{\wt{w}}}_{\rhobar}$, where $\lambda = (\lambda_j)\in\sset{(3,0), (2,1)}^f$ and $\wt{w}\in X(\sigma)$.
    
    More precisely, via the isomorphism (\ref{Comp Eq ism multi}), for any choice of $\lambda=(\lambda_j)\in\sset{(3,0),(2,1)}^f$ and $\wt{w}\in X(\sigma)$ the kernel of the natural surjection $R^{\leq(3,0),\sigma}_{\rhobar}\ddbra{X_1,\dots,X_{2f}}\onto R^{\lambda,\tau_{\wt{w}}}_{\rhobar}\ddbra{X_1,\dots,X_{2f}}$ is generated by the prime ideal $\sum_{j=0}^{f-1}\fp^{(j),\lambda_{f-1-j}}_{\wt{w}}$ of $S$, where the ideals $\fp^{(j),\lambda_{f-1-j}}_{\wt {w}}$ of $S$ are found in Tables 4-5.
\end{theorem}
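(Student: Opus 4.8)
The plan is to adapt the proof of Theorem~\ref{Comp Thm single} to handle several tame types simultaneously, exactly paralleling the passage from \cite[Prop.~4.2.1]{BHHMS} to \cite[Prop.~4.3.1]{BHHMS}. First I would note that by definition $R^{\leq(3,0),\sigma}_{\rhobar}$ is the flat closure of $\bigcup_{\wt{w}\in X(\sigma)}\Spec R^{\leq(3,0),\tauw}_{\rhobar}[1/p]$ inside $\Spec R^{\square}_{\rhobar}$, so it suffices to glue the descriptions of the individual $R^{\leq(3,0),\tauw}_{\rhobar}$ from Theorem~\ref{Comp Thm single} along their generic fibers. The key combinatorial input is that for a fixed $\sigma$ the condition ``$\wt{w}\in X(\sigma)$'' means $\wt{w}_j\neq\theta(\sigma)_j$ for all $j$, so in each embedding $j$ there are exactly two possibilities for $\wt{w}_{f-1-j}$ (the two elements of $\set{t_{(2,1)},\fw t_{(2,1)},t_{(1,2)}}$ other than $\theta(\sigma)_{f-1-j}$); this is precisely what makes a single ring $S^{(j)}$ (Table~4 or Table~5 according to whether $\theta(\sigma)_{f-1-j}$ is $t_{(1,2)}$ or $t_{(2,1)}$) large enough to carry the deformation problems for all the relevant $\tauw$ at once.

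Next I would set up the moduli-theoretic side: build a Kisin-module deformation problem in which the shape in embedding $f-1-j$ is allowed to vary over the two admissible values, using the $\wt{w}$-gauge formalism of Definition~\ref{Kisin Def w-gauge} and the formal smoothness of forgetting the gauge basis (Proposition~\ref{Kisin Prop lift w-gauge}), just as in the proof of Theorem~\ref{Comp Thm single}. The matrices $\ovl{A}\jj$ are those of (\ref{Comp Eq matrix Abar}), with the proviso (already recorded in Theorem~\ref{Comp Thm single}) that the lower-left entry equals $0$ or lies in $\FF\x$ according to whether $a_{f-1-j}=0$; one replaces $d_{22}$ in row~3 of Tables~4--5 by $x_{22}=d_{22}-[\ovl{d_{22}}]$ as indicated, and otherwise the monodromy and height computations are entries of the same tables. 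Running the argument of \cite[Thm.~5.12]{LLHLM18}, \cite[Cor.~5.13]{LLHLM18} then yields an isomorphism $R^{\leq(3,0),\sigma}_{\rhobar}\ddbra{X_1,\dots,X_{2f}}\cong R^{\leq(3,0),\tau,\nabla}_{\ovl{\fM},\ovl{\beta}}\ddbra{Y_1,\dots,Y_4}$, and unwinding $R^{\leq(3,0),\tau,\nabla}_{\ovl{\fM},\ovl{\beta}}$ over all shapes gives the ring $S/\bigcap_{\wt{w}\in X(\sigma)}\sum_j I^{(j)}_{\wt{w}}$ of (\ref{Comp Eq ism multi}); the intersection (rather than a single sum of ideals) appears precisely because one takes the flat closure of a \emph{union} of generic fibers. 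The statement about irreducible components, and the identification of the kernel of $R^{\leq(3,0),\sigma}_{\rhobar}\ddbra{X_1,\dots,X_{2f}}\onto R^{\lambda,\tauw}_{\rhobar}\ddbra{X_1,\dots,X_{2f}}$ with $\sum_j\fp^{(j),\lambda_{f-1-j}}_{\wt{w}}$, then follows from Theorem~\ref{Comp Thm single} applied to each $\tauw$ together with the fact that the $\fp^{(j),\lambda_{f-1-j}}_{\wt{w}}$ are prime and the $\Spec R^{\lambda,\tauw}_{\rhobar}$ are $\cO$-flat with reduced special fiber (so no embedded components can arise in the gluing).

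The main obstacle I expect is purely bookkeeping: one must check that the ideals $I^{(j)}_{\wt{w}}$ coming from the two admissible shapes in embedding $j$ genuinely cut out, inside the \emph{single} ring $S^{(j)}$, the loci one expects, and that their behaviour under the extra non-semisimple variable (the $\ovl{d_{21}}$ resp.\ $\ovl{d_{22}}$ in $\FF\x$, and the substitution $x_{22}=d_{22}-[\ovl{d_{22}}]$) is compatible across shapes --- i.e.\ that equation~(25) of \cite[Prop.~4.2.1]{BHHMS}, which is what guarantees the relevant entries lie in the correct ideals, continues to hold for every $\wt{w}\in X(\sigma)$. As in Theorem~\ref{Comp Thm single}, this is exactly where the hypothesis $\wt{w}\in X(\sigma)$ (equivalently $a_{f-1-j}=0$ whenever $\wt{w}_{f-1-j}=t_{(1,2)}$) is used: it forbids precisely the shape in which the non-semisimple parameter would obstruct the computation. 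Granting this, the remaining verifications --- reducedness, $\cO$-flatness, primality of the $\fp^{(j),\lambda}_{\wt{w}}$, and the explicit form of Tables~4--5 --- are identical to those in \cite[Prop.~4.3.1]{BHHMS} and I would simply cite them, noting the two small modifications already flagged before the statement.
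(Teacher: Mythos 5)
Your plan misses the step that is the actual content of this theorem, and the one place where the non\nobreakdash-semisimple case needs a new idea. The different $\wt{w}\in X(\sigma)$ give rise to \emph{different} tame types $\tau_{\wt{w}}$, so there is no single category $Y^{\leq(3,0),\tau}$, no single Kisin module $\ovl{\fM}$ with a gauge basis, and hence no way to ``run the argument of \cite[Thm.~5.12]{LLHLM18}'' for a deformation problem ``in which the shape varies'': the displayed isomorphism $R^{\leq(3,0),\sigma}_{\rhobar}\ddbra{X_1,\dots,X_{2f}}\cong R^{\leq(3,0),\tau,\nabla}_{\ovl{\fM},\ovl{\beta}}\ddbra{Y_1,\dots,Y_4}$ you propose does not make sense in the multi-type setting. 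The proof (following \cite[Prop.~4.3.1]{BHHMS}) instead works directly with an \'etale $\varphi$-module $\cM$ over $\ovl{S}\eqdef S/\bigcap_{\wt{w}\in X(\sigma)}\sum_j I^{(j)}_{\wt{w}}$, applies $\VV_K^*$ to get a map $\psi\colon R^{\Box}_{\rhobar|_{G_{K_\infty}}}\ddbra{\un{X}',\un{X}''}\to\ovl{S}\ddbra{Y_1,\dots,Y_4}$, and the heart of the matter is the \emph{surjectivity} of $\psi$ (Claim~1 of that proof). Without this, knowing the single-type rings from Theorem~\ref{Comp Thm single} and ``gluing along generic fibers'' does not produce the integral presentation (\ref{Comp Eq ism multi}): the single-type presentations come with a priori unrelated coordinates, and identifying the flat closure of the union with the explicit quotient of the one ring $S$ requires exactly this comparison map and its surjectivity.

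Moreover, this surjectivity is precisely where the non-semisimple hypothesis intervenes, and your proposal never touches it. One must modify the definition of $\psi$: choose $j_0$ with $a_{f-1-j_0}\neq0$ (so $\delta_{22}^{(j_0)}\in\FF\x$) and send $X'_{f-1}$ to the new coordinate $x_{22}^{(j_0)}$ coming from the non-semisimple parameter; the injectivity-on-tangent-vectors computation then uses both this choice and the fact that $\End_{G_K}(\rhobar)\cong\FF$ because $\rhobar$ is a non-split extension (in the semisimple case the endomorphism algebra is different and the argument of \cite{BHHMS} does not carry over verbatim). The point you flag as the main obstacle --- equation~(25) of \cite[Prop.~4.2.1]{BHHMS} and the condition $a_{f-1-j}=0$ when $\wt{w}_{f-1-j}=t_{(1,2)}$ --- is indeed where $\wt{w}\in X(\rhobar)$ is used, but that is the single-type Theorem~\ref{Comp Thm single}, not the new difficulty here. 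The remaining assertions (irreducible components, the kernels $\sum_j\fp^{(j),\lambda_{f-1-j}}_{\wt{w}}$) do follow, as you say, from Theorem~\ref{Comp Thm single} once the presentation is established, but as written your argument has a genuine gap: it neither constructs nor proves surjective the map comparing the framed deformation ring with $\ovl{S}\ddbra{Y_1,\dots,Y_4}$.
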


\begin{proof}
    We follow the proof of \cite[Prop.~4.3.1]{BHHMS} and use without comment the notation of \loc\  The main difference in the non-semisimple case is that we need to modify the definition of the map $\psi$ of \loc\,and to prove the Claim $1$ of \loc\,in this case.
    
    Recall from (\ref{Comp Eq etale phi}) and (\ref{Comp Eq etale irr}) that there exist $\delta_{12}\j,\delta_{21}\j\in\FF\x$ and $\delta_{22}\j\in\FF$, such that
    \begin{equation*}
        \Mat(\phi_{\ovl{\cM}}\jj)=\begin{pmatrix}\delta_{12}\j v&0\\\delta_{22}\j v&\delta_{21}\j v\end{pmatrix}s_j^{-1}v^{\mu'_j},
    \end{equation*}
    where $\mu_j'\eqdef\mu_j-(1,1)=(r_j+1,0)$. Note that $\delta_{22}\j=0$ if and only if $a_{f-1-j}=0$. From now on, we assume that $\rhobar$ is non-semisimple. The semisimple case is similar and is treated in \cite[Prop.~4.3.1]{BHHMS}. In particular, there exists at least one $j\in\cJ$, such that $\delta_{22}\j\neq 0$. We fix one such $j$ and denote it $j_0$.
    
    Let $\ovl{S}\eqdef S/\bigcap_{\wt{w}\in X(\sigma)}\sum_{j}I^{(j)}_{\wt{w}}$ and consider the \'etale $\varphi$-module $\cM$ over $\cO_{\cE,\ovl{S}}$ given by
    \begin{equation*}
        \Mat(\phi_{\cM}^{(f-1-j)})=
        \begin{pmatrix}
            (v+p)\bigbra{[\delta_{12}^{(j)}]+x_{12}\xj}+c_{12}^{(j)}+\frac{b_{12}^{(j)}}{v}&\frac{1}{v}\big((v+p)d_{11}^{(j)}+c_{11}^{(j)}\big)\\
            (v+p)\bigbra{[\delta_{22}\j]+x_{22}\j}+c_{22}^{(j)}&(v+p)\bigbra{[\delta_{21}^{(j)}]+x_{21}\xj}+c_{21}\j+\frac{b_{21}\j}{v}
        \end{pmatrix}s_{j}^{-1}v^{\mu_j'}
    \end{equation*}
    in a suitable basis, where $b_{21}^{(j)}\eqdef0$ if $\theta(\sigma)_{f-1-j}=t_{(1,2)}$ and  $b_{12}^{(j)}\eqdef0$ if $\theta(\sigma)_{f-1-j}=t_{(2,1)}$. In particular, we see that $\cM\otimes_{\ovl{S}}\FF\cong\ovl{\cM}$.
    
    Fix an $\FF$-basis $\gamma_{\FF}$ of $\VV_K^*(\ovl{\cM})\cong\rhobar|_{G_{K_{\infty}}}$ and fix an $\ovl{S}$-basis $\gamma$ of $\VV_K^*(\cM)$ lifting $\gamma_{\FF}$. Denote $\ovl{S}\ddbra{\un{Y}}\eqdef\ovl{S}\ddbra{Y_1,Y_2,Y_3,Y_4}$. Then the $G_{K_{\infty}}$-representation $\VV_K^*\bigbra{\cM\wh{\otimes}_{\ovl{S}}\ovl{S}\ddbra{\un{Y}}}\cong\VV_K^*(\cM)\wh{\otimes}_{\ovl{S}}\ovl{S}\ddbra{\un{Y}}$ over $\ovl{S}\ddbra{\un{Y}}$ together with the basis $\bbra{1+\smat{Y_1&Y_2\\Y_3&Y_4}}(\gamma\otimes 1)$ give rise to a homomorphism $\psi_0:R^{\square}_{\rhobar|_{G_{K_{\infty}}}}\to\ovl{S}\ddbra{\un{Y}}$, and we extend it to a homomorphism $\psi: R^{\Box}_{\rhobar|_{G_{K_\infty}}}\ddbra{\un{X}',\un{X}''}\rightarrow \ovl{S}\ddbra{\un{Y}}$ by
    \begin{equation*}
        \psi(X'_j)=
        \begin{cases}
            x_{12}\xj&\text{if}~0\leq j<f-1\\
            x_{22}^{(j_0)}&\text{if}~j=f-1;
        \end{cases}\quad
        \psi(X''_j)=
        \begin{cases}
            x_{21}\xj&\text{if}~0\leq j<f-1\\
            Y_4&\text{if}~j=f-1.
        \end{cases}
    \end{equation*}
    
    \paragraph{\textit{Claim.}} The map $\psi: R^{\Box}_{\rhobar|_{G_{K_\infty}}}\ddbra{\un{X}',\un{X}''}\rightarrow \ovl{S}\ddbra{\un{Y}}$ is surjective.
    
    Now we prove the Claim following the proof of Claim 1 in \cite[Prop.~4.3.1]{BHHMS} (which treats the semisimple case).
    
    We check that $\psi$ is injective on reduced tangent vectors, i.e.\ on $\FF[\eps]/(\eps^2)$-points: Let $t_0:\ovl{S}\ddbra{\un{Y}}\rightarrow \FF\rightarrow \FF[\eps]/(\eps^2)$ be the zero vector, where the first map maps all the variables of $\ovl{S}$ and the variables $Y_i$ to $0$. Fix one continuous homomorphism $t:\ovl{S}\ddbra{\un{Y}}\rightarrow \FF[\eps]/(\eps^2)$ such that $t\circ\psi=t_0\circ\psi$. The goal is to prove that $t=t_0$.
    
    Abusing notation, we write $t(b_{ik}^{(j)})=\eps b_{ik}^{(j)}$ for some $b_{ik}^{(j)}\in\FF$ on the right, and similarly $t(c_{ik}^{(j)})=\eps c_{ik}^{(j)}$, $t(d_{ik}^{(j)})=\eps d_{ik}^{(j)}$, $t(x_{ik}^{\ast (j)})=\eps x_{ik}^{(j)}$ if $(i,k)=(2,1)$ or $(1,2)$, $t(x_{22}\j)=\eps x_{22}^{(j)}$, and  $t(Y_{i})=\eps y_{i}$. Since $t\circ\psi=t_0\circ\psi$, evaluating on the variables $\un{X}'$ and $\un{X}''$ we deduce that  
    \begin{equation}\label{eq:c1}
        x_{12}^{(j)}=x_{21}^{(j)}=0~\text{for}~0\leq j<f-1,~y_4=0~\text{and}~x_{22}^{(j_0)}=0.
    \end{equation}
    Moreover, by the definition of $\psi_0$ and using $t\circ\psi=t_0\circ\psi$, there is an isomorphism 
    \begin{equation}\label{eq:c2}
        \lambda:\cM_{\ovl{S}\ddbra{\un{Y}}}\widehat{\otimes}_{\ovl{S}\ddbra{\un{Y}},t}\FF[\eps]/(\eps^2)\ism \cM_{\ovl{S}\ddbra{\un{Y}}}\widehat{\otimes}_{\ovl{S}\ddbra{\un{Y}},t_0}\FF[\eps]/(\eps^2)
    \end{equation}
    of \'etale $\varphi$-modules over $\FF[\eps]/(\eps^2)$ which induces the identity of $\ovl{\cM}$ modulo $\varepsilon$ and
    such that $\VV^*_{K}(\lambda)$ sends the basis $\bbra{1+\eps\smat{y_1&y_2\\ y_3&y_4}}(\gamma\otimes1)$ to $\gamma\otimes 1$ on the corresponding $G_{K_{\infty}}$-representations over $\FF[\eps]/(\eps^2)$. In particular, the isomorphism $\lambda$ is realized by the change of basis (i.e.\ $\varphi$-conjugation) by a matrix of the form $1+\eps M_{f-1-j}\in\GL_2(\cO_{\mathcal{E},\FF[\eps]/(\eps^2)})$ for some $M_{f-1-j}\in\M_2(\cO_{\mathcal{E},\FF})=\M_2\bigbra{\FF\dbra{v}}$. In other words, we have
    \begin{equation}\label{eq:c3}
        \begin{split}
            &(1+\eps M_{j-1})\begin{pmatrix}\delta_{12}^{(j)}&0\\\delta_{22}^{(j)}&\delta_{21}^{(j)}\end{pmatrix}s_j^{-1}v^{\mu'_j}(1-\eps \varphi(M_{j}))\\
            &\qquad=
        \begin{pmatrix}
            \delta_{12}^{(j)}+\eps(x_{12}^{(j)}+c_{12}^{(j)}v^{-1}+b_{12}^{(j)}v^{-2})&\eps(d_{11}^{(j)}v^{-1}+c_{11}^{(j)}v^{-2})\\
            \delta_{22}\j+\eps(x_{22}^{(j)}+c_{22}^{(j)}v^{-1})&\delta_{21}^{(j)}+\eps(x_{21}^{(j)}+c_{21}^{(j)}v^{-1}+b_{21}^{(j)}v^{-2})
        \end{pmatrix}
            s_j^{-1}v^{\mu_j'},
        \end{split}
    \end{equation}
    where we have divided by $v$, and $j$ is considered in $\ZZ/f\ZZ$ as usual.
    
    Exactly as in the proof of \cite[Prop.~4.3.1]{BHHMS}, we can show that $M_j\in\M_2\bigbra{\FF\ddbra{v}}$ for each $j$. Then we write $M_j=\smat{m_{11}\j&m_{12}\j\\m_{21}\j&m_{22}\j}$ with $m_{ik}\j\in\FF\ddbra{v}$. Comparing the coefficients of $\eps$ in (\ref{eq:c3}), multiplying on the right by $v^{-\mu'_j}s_j$ and using that $s=1$ since $\rhobar$ is reducible, we get
    \begin{equation}\label{eq:c5}
    \begin{split}
        &\begin{pmatrix}
            m_{11}\ji\delta_{12}\j+m_{12}\ji\delta_{22}\j&m_{12}\ji\delta_{21}\j\\
            m_{21}\ji\delta_{12}\j+m_{22}\ji\delta_{22}\j&m_{22}\ji\delta_{21}\j
        \end{pmatrix}\\
        -&\begin{pmatrix}
            \delta_{12}\j\varphi\bra{m_{11}\j}&\delta_{12}\j v^{r_j+1}\varphi\bra{m_{12}\j}\\
            \delta_{22}\j\varphi\bra{m_{11}\j}+\delta_{21}\j v^{-(r_j+1)}\varphi\bra{m_{21}\j}&\delta_{22}\j v^{r_j+1}\varphi\bra{m_{12}\j}+\delta_{21}\j\varphi\bra{m_{22}\j}
        \end{pmatrix}\\
        =&\begin{pmatrix}
            x_{12}^{(j)}+c_{12}^{(j)}v^{-1}+b_{12}^{(j)}v^{-2}&d_{11}^{(j)}v^{-1}+c_{11}^{(j)}v^{-2}\\
            x_{22}^{(j)}+c_{22}^{(j)}v^{-1}&x_{21}^{(j)}+c_{21}^{(j)}v^{-1}+b_{21}^{(j)}v^{-2}
        \end{pmatrix}.
    \end{split}
    \end{equation}
    
    Comparing the $(1,1)$,$(1,2)$,$(2,2)$-entries of (\ref{eq:c5}) and using that $M$ is $v$-integral, we deduce that $c_{12}\j=b_{12}\j=d_{11}\j=c_{11}\j=c_{21}\j=b_{21}\j=0$ for all $j$. Comparing the order of $v$ of the $(2,1)$-entries of (\ref{eq:c5}) and using that $2\leq r_j+1\leq p$, we deduce that $v\mid m_{21}\j$ in $\FF\ddbra{v}$ and $c_{22}\j=0$ for all $j$. Comparing the $(1,2)$-entries of (\ref{eq:c5}) and using that $d_{11}\j=c_{11}=0$, we deduce that $m_{12}\j=0$ for all $j$. Specializing the equation (\ref{eq:c5}) at $v=0$ and using that $v\mid m_{21}\j$ in $\FF\ddbra{v}$ and $p-(r_j+1)\geq 1$, we get
    \begin{equation}\label{eq:c6}
    \left\{
    \begin{aligned}
        \delta_{12}\j(m_{11}\ji-m_{11}\j)\big|_{v=0}&=x_{12}\j\\
        \delta_{22}\j(m_{22}\ji-m_{11}\j)\big|_{v=0}&=x_{22}\j\\
        \delta_{21}\j(m_{22}\ji-m_{22}\j)\big|_{v=0}&=x_{21}\j.
    \end{aligned}\right.
    \end{equation}
    Since $x_{12}\j=x_{21}\j=0$ for $0\leq j<f-1$ by (\ref{eq:c1}) and $\delta_{12}\j,\delta_{21}\j\in\FF\x$,  we deduce from (\ref{eq:c6}) that $m_{11}\j\big|_{v=0}$ (resp.\,$m_{22}\j\big|_{v=0}$) does not depend on $j$ and we denote it $m_{11}$ (resp.\,$m_{22}$). By (\ref{eq:c6}) again, we deduce that $x_{12}^{(f-1)}=x_{21}^{(f-1)}=0$. We also have $x_{22}^{(j_0)}=0$ by (\ref{eq:c1}), hence by the second equation of (\ref{eq:c6}) we get $m_{11}=m_{22}$ since $\delta_{22}^{(j_0)}\in\FF\x$ by our choice of $j_0$. Then by the second equation of (\ref{eq:c6}) again, we get $x_{22}\j=0$ for all $j$.
    
    As a result, the right-hand side of (\ref{eq:c5}) vanishes, hence we conclude that $(M_{f-1-j})_j\in \End_{\varphi\text{-mod}}(\ovl{\cM})$, and we denote this endomorphism by $\xi$. Since the isomorphism (\ref{eq:c2}) satisfies
    \begin{equation*}
        \bigbra{1+\eps \VV^*_K(\xi)} \bbbra{\bbra{1+\eps\begin{pmatrix}y_1&y_2\\ y_3&y_4\end{pmatrix}}(\gamma\otimes 1)} = \gamma \otimes 1,
    \end{equation*}
    it follows that $\VV^*_K(\xi)=-\smat{y_1&y_2\\y_3&y_4}$ with respect to the basis $\gamma_\FF$. Moreover, we have $\End_{\varphi\text{-mod}}(\ovl{\cM})\cong\End_{G_{K_\infty}}(\rhobar|_{K_\infty})\cong\End_{G_K}(\rhobar)\cong\FF$, where the second isomorphism follows from \cite[Lemma~3.2.8]{BHHMS} and the last isomorphism follows from the fact that $\rhobar$ is non-semisimple. As $y_4=0$ by (\ref{eq:c1}) we conclude from the formula for $\VV^*_K(\xi)$ that $y_i=0$ for all $i$. Therefore, we conclude that $t=t_0$, completing the proof of the Claim.
    
    \hspace{\fill}
    
    The rest of the proof and the computations are completely analogous to that of \cite[Prop.~4.3.1]{BHHMS} using Theorem \ref{Comp Thm single}.
\end{proof}

We let $\fq_1^{(j),(2,1)},\fq_2^{(j),(2,1)},\fq_3^{(j),(2,1)}$ be the ideals of $S\j$ defined as in \cite[Lemma~4.3.2]{BHHMS}. The following result is a generalization of \cite[Prop.~4.3.3]{BHHMS} (where $\rhobar$ was assumed to be semisimple).

\begin{proposition}\label{Comp Prop p}
    Keep the assumption of Theorem \ref{Comp Thm multi-type}. Fix $0\leq j\leq f-1$ and $\wt{w}\in X(\sigma)$ such that $i(\wt{w})_{f-1-j}=2$. Then we have $p\in\fq_1^{(j),(2,1)}\cap\fq_2^{(j),(2,1)}+\fp_{\wt{w}}^{(j),(3,0)}$ if $\theta(\sigma)_{f-1-j}=t_{(1,2)}$ and $p\in\fq_2^{(j),(2,1)}\cap\fq_3^{(j),(2,1)}+\fp_{\wt{w}}^{(j),(3,0)}$ if $\theta(\sigma)_{f-1-j}=t_{(2,1)}$.
\end{proposition}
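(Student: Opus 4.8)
The plan is to follow the strategy of \cite[Prop.~4.3.3]{BHHMS}, making the modifications needed to accommodate the non-semisimple case, where the relevant variable $d_{22}^{(j)}$ in row 3 of Tables 4--5 has been replaced by $x_{22}^{(j)} = d_{22}^{(j)} - [\ovl{d_{22}^{(j)}}]$. First I would write down explicitly, from Tables 4--5, the ideals $\fp^{(j),(3,0)}_{\wt{w}}$ and $\fq_1^{(j),(2,1)}, \fq_2^{(j),(2,1)}, \fq_3^{(j),(2,1)}$ in the ring $S^{(j)}$ in the case $i(\wt{w})_{f-1-j}=2$, separating the two subcases according to whether $\theta(\sigma)_{f-1-j}=t_{(1,2)}$ (Table 4) or $t_{(2,1)}$ (Table 5). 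Recall that when $i(\wt{w})_{f-1-j}=2$ the matrix $\ovl{A}^{(f-1-j)}$ is $\pmat{0&\alpha_jv\\\beta_jv^2&\alpha_ja_{f-1-j}v}$, so $\ovl{d_{22}^{(j)}}=0$ precisely when $a_{f-1-j}=0$; in either case the relation defining the $(3,0)$-component will, after working modulo $\fq_1\cap\fq_2$ (resp. $\fq_2\cap\fq_3$), express $p$ in terms of a product of the entries appearing in $\fp^{(j),(3,0)}_{\wt{w}}$.

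The key computation is to trace through which monomial relation in $S^{(j)}$ forces $p$ into the sum. In the semisimple case of \loc\ this comes down to an identity of the shape $p \equiv (\text{unit})\cdot d_{21}^{(j)} d_{12}^{(j)}$ (or the analogue with the $x$-variables) modulo the listed ideals, coming from the determinant condition $\det A^{(j)} \in R\ddbra{v}^\times (v+p)^{3}$ together with the monodromy equations; I would reproduce this, checking that replacing $d_{22}^{(j)}$ by $x_{22}^{(j)}$ only shifts the relevant entry by the Teichm\"uller constant $[\ovl{d_{22}^{(j)}}]$, which is a unit times $p$-adic perturbation and does not affect membership of $p$ in the ideal. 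Concretely, I expect that in the $\theta(\sigma)_{f-1-j}=t_{(1,2)}$ case the relation witnessing $p\in\fq_1^{(j),(2,1)}\cap\fq_2^{(j),(2,1)}+\fp_{\wt{w}}^{(j),(3,0)}$ is the one coming from the $(2,1)$-entry of the Frobenius matrix (where $\beta_j v^2$ sits), while in the $\theta(\sigma)_{f-1-j}=t_{(2,1)}$ case it is the $(2,2)$- or $(1,2)$-entry relation that produces $p\in\fq_2^{(j),(2,1)}\cap\fq_3^{(j),(2,1)}+\fp_{\wt{w}}^{(j),(3,0)}$; the asymmetry between the two conclusions mirrors the asymmetry in Tables 4 vs. 5.

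The main obstacle I anticipate is bookkeeping: making sure that the substitution $d_{22}^{(j)}\mapsto x_{22}^{(j)}+[\ovl{d_{22}^{(j)}}]$ is carried consistently through every defining equation of $S^{(j)}$, $\fp^{(j),(3,0)}_{\wt{w}}$ and the $\fq_i^{(j),(2,1)}$, and verifying that the resulting ideals are still exactly the ones appearing in \cite[Lemma~4.3.2]{BHHMS} up to this harmless change of variables — so that the membership relation proved there transfers verbatim. Since Theorem \ref{Comp Thm multi-type} has already established the isomorphism \eqref{Comp Eq ism multi} with these modified tables, and $\fq_1^{(j),(2,1)}\cap\fq_2^{(j),(2,1)}$ (resp. $\fq_2^{(j),(2,1)}\cap\fq_3^{(j),(2,1)}$) is by definition the ideal cutting out the union of the two relevant components, the statement reduces to the single explicit identity in $S^{(j)}$, and the proof is then completely analogous to that of \cite[Prop.~4.3.3]{BHHMS}; I would simply indicate the modification and refer to \loc\ for the remaining details.
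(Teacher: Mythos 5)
Your proposal follows exactly the route the paper takes: the paper's proof of Proposition \ref{Comp Prop p} simply observes that the argument is purely computational and completely analogous to \cite[Prop.~4.3.3]{BHHMS}, the only change being the substitution $d_{22}^{(j)}\mapsto x_{22}^{(j)}+[\ovl{d_{22}^{(j)}}]$ in the modified Tables 4--5, which is precisely the bookkeeping point you identify. Your sketch is consistent with this, so it is essentially the same approach as the paper's.
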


\begin{proof}
    The proof is purely computational and is completely analogous to that of \cite[Prop.~4.3.3]{BHHMS}.
\end{proof}

\section{An upper bound of the Gelfand-Kirillov dimension}\label{Sec GK}

\hspace{1.5em}%
In this section, we recall a criterion which gives an upper bound for the Gelfand-Kirillov dimensions of certain admissible smooth representations of $\GL_2(K)$ over $\FF$. This criterion is given in \cite[\S6]{BHHMS} and is used in the semisimple case. We show that it works in the non-semisimple case as well using the results of \cite[\S4]{HW}, see Theorem \ref{GK Thm upper}. 

\hspace{\fill}

Let $I\subseteq\GL_2(\cO_K)$ be the subgroup of upper triangular matrices modulo $p$. Let $I_1\subseteq I$ be the subgroup of upper unipotent matrices modulo $p$. Let $K_1\eqdef1+pM_2(\cO_K)$. Let $Z$ be the center of $\GL_2(K)$ and let $Z_1\eqdef Z\cap K_1$. Let $\fm_{K_1}$ denote the unique maximal ideal of the Iwasawa algebra $\FF\ddbra{K_1/Z_1}$. 
The ideal of $\FF\ddbra{\GL_2(\cO_K)/Z_1}$ generated by $\fm_{K_1}$ under the natural inclusion $\FF\ddbra{K_1/Z_1}\into\FF\ddbra{\GL_2(\cO_K)/Z_1}$ is also denoted by $\fm_{K_1}$ when there is no possible confusion. Denote $\Gamma\eqdef\GL_2(k)$ and $\wt{\Gamma}\eqdef\FF\ddbra{\GL_2(\cO_K)/Z_1}/\fm_{K_1}^2$.

Let $\rhobar:G_K\to\GL_2(\FF)$ be as in  (\ref{Comp Eq rhobar}). Recall that $\cite[\S4.1]{HW}$ constructs a finite dimensional representation $\wt{D}_0(\rhobar)$ of (the non-commutative ring) $\wt{\Gamma}$ over $\FF$ (generalizing the constructions of \cite[\S13]{BP12}) characterized by the following properties:
\begin{enumerate}
        \item $\soc_{\wt{\Gamma}}\wt{D}_0(\rhobar)=\bigoplus\nolimits_{\sigma\in W(\rhobar)}\sigma$;
        \item for each $\sigma\in W(\rhobar)$, we have  $[\wt{D}_0(\rhobar):\sigma]=1$;
        \item $\wt{D}_0(\rhobar)$ is maximal with respect to properties (i) and (ii).
    \end{enumerate}
We have a decomposition of $\wt{\Gamma}$-representations $\wt{D}_0(\rhobar)=\bigoplus\nolimits_{\sigma\in W(\rhobar)}\wt{D}_{0,\sigma}(\rhobar)$, where each $\wt{D}_{0,\sigma}(\rhobar)$ satisfies $\soc_{\wt{\Gamma}}\wt{D}_{0,\sigma}(\rhobar)=\sigma$. We have $\wt{D}_0(\rhobar)^{K_1}\cong D_0(\rhobar)$, which is the representation of $\Gamma$ over $\FF$ defined in \cite[\S13]{BP12}. We also define the $I$-representation $D_1(\rhobar)\eqdef D_0(\rhobar)^{I_1}=\wt{D}_0(\rhobar)^{I_1}$.

By \cite[Thm.~4.6]{HW}, the representation $\wt{D}_0(\rhobar)$ of $\wt{\Gamma}$ over $\FF$ is multiplicity-free. Moreover, one can describe explicitly the structure of each $\wt{D}_{0,\sigma}(\rhobar)$ for $\sigma\in W(\rhobar)$. We use the notation of \S\ref{Sec Pre}. Let $\lambda\in X_1(\un{T})$ be such that $\sigma\cong F(\lambda)$. By (\ref{Comp Eq SW}) and \cite[Lemma~2.4.4]{BHHMS} there exist a subset $J_{\rhobar,\sigma}\subseteq\sset{0,\ldots,f-1}$ and elements $\varepsilon_j\in\sset{\pm 1}$ for each $j\in J_{\rhobar,\sigma}$, such that 
\begin{equation*}
    W(\rhobar)=\sset{F(\ft_{\lambda}(b_0,\ldots,b_{f-1})):b_j\in\set{0,\varepsilon_j}~\text{if}~j\in J_{\rhobar,\sigma}~\text{and}~b_j=0~\text{if}~j\notin J_{\rhobar,\sigma}}.
\end{equation*}
Then by \cite[Lemma~4.8]{HW} and \cite[Cor.~2.35]{HW}, the multiplicity-free representation $\wt{D}_{0,\sigma}(\rhobar)$ has Jordan--H\"older factors
\begin{equation}\label{GK Eq JH}
    \JH(\wt{D}_{0,\sigma}(\rhobar))=\sset{\sigma_{\un{a}}\eqdef F(t_{\lambda}(a_0,\ldots,a_{f-1})):a_j\in\ZZ,~\sgn(a_j)\neq\varepsilon_j~\text{for}~j\in J_{\rhobar,\sigma},~\sum\limits_{j=0}^{f-1}\bbbra{\frac{a_j}{2}}\leq 1}
\end{equation}
and its submodule structure is determined as follows: the unique subrepresentation of $\wt{D}_{0,\sigma}(\rhobar)$ with cosocle $\sigma_{\un{a}}$ has constituents $\sigma_{\un{b}}$ for all $\un{b}$ such that each $b_j$ is between $0$ and $a_j$. Here for $x\in\RR$, we denote by $[x]$ the largest integer which is smaller than or equal to $x$.

Now we give a generalization of \cite[Thm.~6.4.7]{BHHMS} where $\rhobar$ was assumed to be semisimple. This gives an upper bound for the Gelfand-Kirillov dimension $\dim_{\GL_2(K)}(\pi)$ of some admissible smooth representations $\pi$ of $\GL_2(K)$ over $\FF$. We refer to \cite[\S5]{BHHMS} for the notion of the Gelfand-Kirillov dimension.

\begin{theorem}\label{GK Thm upper}
    Let $\pi$ be an admissible smooth representation of $\GL_2(K)$ over $\FF$ with a central character. Assume that:
    \begin{enumerate}
        \item
        we have $\JH(\soc_{\GL_2(\cO_K)}(\pi))=W(\rhobar)$ up to multiplicity;
        \item
        for each $\sigma\in W(\rhobar)$, we have $[\pi[\fm_{K_1}^2]|_{\GL_2(\cO_K)}:\sigma]=[\soc_{\GL_2(\cO_K)}\pi:\sigma]$;
        \item
        we have $\JH(\pi^{I_1})=\JH(D_1(\rhobar))$ up to multiplicity (as $I$-representations). 
    \end{enumerate}
    Then $\dim_{\GL_2(K)}(\pi)\leq f$.
\end{theorem}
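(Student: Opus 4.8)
The plan is to bound the Gelfand--Kirillov dimension by producing a suitable graded module over the associated graded of the Iwasawa algebra and showing it has the right dimension, following the strategy of \cite[\S6]{BHHMS}. The starting point is that $\dim_{\GL_2(K)}(\pi)$ can be read off from the growth of $\pi^{\vee}$ as a module over $\FF\ddbra{K_1/Z_1}$, and by standard filtration arguments it suffices to bound the dimension of the graded module $\mathrm{gr}(\pi^\vee)$ over $\mathrm{gr}(\FF\ddbra{K_1/Z_1})$, which is (a quotient of) a polynomial ring in $4f$ variables carrying a natural $\Gamma = \GL_2(k)$-action. The key input that makes the dimension drop from $4f$ (the full dimension of $\GL_2(K)$ modulo center, roughly $3f$ after accounting for the center --- more precisely the relevant count) down to $f$ is a collection of relations forcing many of the degree-one generators to act nilpotently on $\mathrm{gr}(\pi^\vee)$.

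First I would use hypotheses (i) and (ii) to identify $\pi[\fm_{K_1}^2]|_{\GL_2(\cO_K)}$, or rather its $I_1$-invariants and its $\Gamma$-socle, with the corresponding invariants of $\wt{D}_0(\rhobar)$: condition (i) pins down the $\GL_2(\cO_K)$-socle, condition (ii) says no extra Jordan--H\"older factors appear in the second layer $\pi[\fm_{K_1}^2]$, and condition (iii) pins down the $I_1$-invariants. Together with the universal/maximality property (iii) characterizing $\wt{D}_0(\rhobar)$ and the multiplicity-one statement \cite[Thm.~4.6]{HW}, this should force a $\wt\Gamma$-equivariant embedding (or surjection on the relevant graded pieces) relating $\pi[\fm_{K_1}^2]$ to $\wt D_0(\rhobar)$, exactly as in the semisimple case of \cite{BHHMS}. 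The explicit submodule structure of each $\wt D_{0,\sigma}(\rhobar)$ recorded in (\ref{GK Eq JH}) --- in particular that constituents $\sigma_{\un a}$ only occur with $\sum_j [a_j/2]\leq 1$ --- is what controls which degree-one operators in $\mathrm{gr}$ can act nontrivially.

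Next I would transport this structural information into the graded setting: the action of $\mathrm{gr}^1(\FF\ddbra{K_1/Z_1})$ on $\mathrm{gr}(\pi^\vee)$ factors, on the socle layer, through the $\wt\Gamma$-module structure of $\pi[\fm_{K_1}^2]$, and the constraints from $\wt D_0(\rhobar)$ translate into the vanishing of a large subspace of degree-one operators on the graded module. Carrying out the bookkeeping with the extension graph and the sets $J_{\rhobar,\sigma}$, $\varepsilon_j$ one checks that the surviving operators span a space of dimension $f$ per $\GL_2(\cO_K)$-orbit, so $\mathrm{gr}(\pi^\vee)$ is supported on a subvariety of dimension $\leq f$, whence $\dim_{\GL_2(K)}(\pi)\leq f$. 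The only genuinely new point compared with \cite{BHHMS} is that $\rhobar$ is allowed to be non-semisimple, so $W(\rhobar)$ is smaller and $\theta$ is merely injective; one must check that the combinatorial input from \cite{HW} (the description (\ref{GK Eq JH}) of $\wt D_{0,\sigma}(\rhobar)$ and the relations among the $a_j$) suffices to run the same argument. I expect the main obstacle to be precisely this step: verifying that, with fewer Serre weights and the more intricate submodule structure of $\wt D_{0,\sigma}(\rhobar)$ in the non-semisimple case, the relations among the degree-one operators are still numerous enough to cut the dimension down to $f$, i.e.\ that no ``extra'' directions survive in $\mathrm{gr}(\pi^\vee)$ coming from the non-split extensions. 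This is a careful but essentially combinatorial verification using (\ref{GK Eq JH}), \cite[Lemma~4.8]{HW}, \cite[Cor.~2.35]{HW} and the analysis of \cite[\S6]{BHHMS}.
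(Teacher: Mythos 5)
Your plan follows essentially the same route as the paper: the paper's proof is precisely a reduction to the argument of \cite[Thm.~6.4.7]{BHHMS}, checking that its conditions (a), (b), (c) of \cite[\S6.4]{BHHMS} still hold for non-semisimple $\rhobar$ --- (a) via \cite[Cor.~6.3.13(i)]{BHHMS} with $V=\bigoplus_{\sigma\in W(\rhobar)}\wt{D}_{0,\sigma}(\rhobar)^{m_\sigma}$, (b) via the multiplicity-freeness of $\wt{D}_{0,\sigma}(\rhobar)$ and the description (\ref{GK Eq JH}) from \cite{HW}, and (c) via \cite[Lemma~6.4.3]{BHHMS} --- which is exactly the mechanism and the new combinatorial input you identify. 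Your description is less precise about which statements of \cite[\S6]{BHHMS} are reused verbatim, but the strategy and the key non-semisimple ingredients coincide with the paper's.
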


\begin{proof}
    The proof is analogous to that of \cite[Thm.~6.4.7]{BHHMS}. For $\rhobar$ not necessarily semisimple, the condition (a) of \cite[\S6.4]{BHHMS} is guaranteed by \cite[Cor.~6.3.13.(i)]{BHHMS}, whose proof works for arbitrary $\rhobar$. In particular, we can take $V=\bigoplus\nolimits_{\sigma\in W(\rhobar)}\wt{D}_{0,\sigma}(\rhobar)^{m_{\sigma}}$ in \loc\ The condition (b) of \cite[\S6.4]{BHHMS} is guaranteed by (\ref{GK Eq JH}) and the fact that $\wt{D}_{0,\sigma}(\rhobar)$ is multiplicity-free. Finally the condition (c) of \cite[\S6.4]{BHHMS} is a consequence of \cite[Lemma~6.4.3]{BHHMS}.
\end{proof}
\section{Global applications}\label{Sec global}

\hspace{1.5em}%
In this section, we use the machinery of patching functors introduced by \cite{EGS15} to prove the main global results: Theorem \ref{Global Thm main}. We follow closely \cite[\S8]{BHHMS} which deals with the semisimple case.

\hspace{\fill}

We assume $p>5$ and $E$ unramified, so that ${\cO}=W(\FF)$. We fix $F$ a totally real number field in which $p$ is unramified. We denote by $S_p$ the set of places of $F$ above $p$. We fix a place $v\in S_p$. Then exactly as in \cite[\S8.1]{BHHMS}, we fix a quaternion algebra $D$ with center $F$, a continuous Galois representation $\rbar:G_F\to\GL_2(\FF)$, a continuous character $\psi:G_F\to W(\FF)\x$, a finite place $w_1$, a finite set of places $S$ of $F$, a compact open subgroup $U=\prod_w U_w\subseteq(D\otimes_{F}\AA_F^{\infty})\x$, a tame inertial type $\tau_w$ of $F_w$ for each $w\in S_p\setminus\set{v}$ and a $\GL_2({\cO}_{F_{w}})$-invariant lattice $\sigma^0(\tau_w^\vee)$ in $\sigma(\tau_w^\vee)=\sigma(\tau_w)^\vee$ for each $w\in S_p\setminus\set{v}$, except that we allow $\rbar_v\eqdef\rbar|_{G_{F_v}}$ to be non-semisimple, see (\ref{Intro Eq generic}).

Then as in \cite[\S6]{EGS15} (see also \cite[p.88]{BHHMS}) we can define a \textbf{patching functor} $M_{\infty}$ from the category of continuous representations $\sigma_v$ of $U_v\cong\GL_2({\cO}_{F_{v}})$ on finite type $W(\FF)$-modules with central character $\psi|_{I_{F_v}}^{-1}\circ\Art_{F_v}|_{\cO_{F_v}\x}$ to the category of finite type $R_\infty$-modules, where $R_\infty\cong R_{\rbar_v}^{\psi_v}\ddbra{X_1,\dots,X_g}$ for some integer $g$. Here $R_{\rbar_v}^{\psi_v}$ denotes the framed deformation ring of $\rbar_v$ with fixed determinant $\varepsilon^{-1}\psi_v$ and $\psi_v\eqdef\psi|_{G_{F_v}}$. We denote by ${\fm}_\infty$ the maximal ideal of $R_\infty$.

We keep the notation of \S\ref{Sec Pre} and \S\ref{Sec GK} with $K\eqdef F_v$, so that $k$ is the residue field of $F_v$. In particular we have $\Gamma=\GL_2(k)$ and $\wt{\Gamma}=\FF\ddbra{\GL_2(\cO_{F_v})/Z_1}/\fm_{K_1}^2.$ For $\sigma$ a Serre weight, we let $P_{\sigma}\eqdef\Proj_{\Gamma}\sigma$ be the projective envelope of $\sigma$ in the category of $\FF[\Gamma]$-modules, $\wt{P}_{\sigma}$ be the projective $\cO[\Gamma]$-module lifting $P_{\sigma}$ and $\Proj_{\wt{\Gamma}}\sigma$ be the projective envelope of $\sigma$ in the category of $\wt{\Gamma}$-modules. If $A$ is a ring and $M$ is an $A$-module, we define the \textbf{scheme-theoretic support} of $M$ to be the quotient $A/\Ann_A(M)$. For each $\tau$ a tame inertial type and $\lambda=(a_j,b_j)_j\in X^*_+(\un{T})$, we define $R_{\infty}^{\lambda,\tau}\eqdef R_{\infty}\otimes_{R_{\rbar_v^{\vee}}}R_{\rbar_v^{\vee}}^{\lambda,\tau}$, where $R_{\rbar_v^{\vee}}^{\lambda,\tau}$ parametrizes potentially crystalline lifts of $\rbar_v^{\vee}$ of Hodge--Tate weights $(a_j,b_j)$ in the $j$-th embedding and inertial type $\tau$. When $a_j=a$ and $b_j=b$ for all $j$ we write $R_{\infty}^{(a,b),\tau}$. The following proposition is a generalization of \cite[Prop.~8.2.3]{BHHMS} and \cite[Prop.~8.2.6]{BHHMS} (where $\rbar_v$ was assumed to be semisimple).

\begin{proposition}\label{Global Prop type}\label{Global Prop Proj}
    There exists an integer $r\geq 1$ such that 
    \begin{enumerate}
        \item 
        for all $\sigma_v\in W(\rbar_v^{\vee})$ the module $M_{\infty}(\sigma_v)$ is free of rank $r$ over its scheme-theoretic support, which is formally smooth over $\FF$.
        \item 
        for all tame inertial type $\tau$ such that $\JH(\ovl{\sigma(\tau)})\cap W(\rbar_v^{\vee})\neq\emptyset$ and all $\GL_2(\cO_{F_v})$-invariant $W(\FF)$-lattices $\sigma^0(\tau)$ in $\sigma(\tau)$ with irreducible cosocle, the module $M_{\infty}(\sigma^0(\tau))$ is free of rank $r$ over its scheme-theoretic support $R_{\infty}^{(1,0),\tau}$, which is a domain.
        \item
        for all $\sigma_v\in W(\rbar_v^{\vee})$, the module $M_{\infty}(\wt{P}_{\sigma_v})$ is free of rank $r$ over $R_{\infty}/\cap_{\tau}\fp_{\tau}$, where $\tau$ runs over all tame inertial types such that $\sigma_v\in\JH(\ovl{\sigma(\tau)})$ and $\fp_{\tau}$ is the prime ideal $\Ker(R_{\infty}\onto R_{\infty}^{(1,0),\tau})$ of $R_{\infty}$.
    \end{enumerate}
\end{proposition}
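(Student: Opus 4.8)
The plan is to follow the structure of \cite[\S8.2]{BHHMS}, adapting each step so that it no longer relies on $\rbar_v$ being semisimple, and instead feeds in the deformation ring computations of \S\ref{Sec Comp} and the representation-theoretic input from \S\ref{Sec GK}. First I would fix $\rhobar\eqdef\rbar_v^\vee$ (after the usual twist bringing it to the normalized form (\ref{Comp Eq rhobar})), and recall the axiomatic properties of the patching functor $M_\infty$ from \cite[\S6]{EGS15}: exactness, the fact that $M_\infty(\sigma(\tau)^0)$ is supported on (and generically free over) the appropriate potentially crystalline deformation ring, and minimality/multiplicity-one at the level of the patched module. The key new geometric ingredient is that, by Theorem \ref{Comp Thm single} and Theorem \ref{Comp Thm multi-type} together with Proposition \ref{Comp Prop p}, the relevant deformation rings $R_\infty^{(1,0),\tau}$ are \emph{domains with formally smooth (hence reduced) special fiber}, and the intersection $R_\infty/\cap_\tau\fp_\tau$ that governs $M_\infty(\wt P_{\sigma_v})$ has the expected form; this is exactly where the non-semisimple case previously broke and where our generalized computations are used.

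The proof proper would proceed in three stages matching (i)--(iii). For (ii): given a tame type $\tau$ with $\JH(\ovl{\sigma(\tau)})\cap W(\rhobar)\neq\emptyset$, I would note $R_\infty^{(1,0),\tau}\neq 0$, invoke the domain property from Theorem \ref{Comp Thm single}/\ref{Comp Thm multi-type}, and then run the Diamond--Fujiwara / patching-multiplicity argument: $M_\infty(\sigma^0(\tau))$ is maximal Cohen--Macaulay over $R_\infty^{(1,0),\tau}$, and one deduces freeness of some rank $r$ from the reducedness of the special fiber plus the fact that $M_\infty(\sigma^0(\tau))/\fm_\infty$ has the expected dimension (here one uses $\JH$ of the reduction of the lattice together with hypothesis on $\soc$, exactly as in \cite[Prop.~8.2.3]{BHHMS}). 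The integer $r$ is then shown to be independent of $\tau$ by comparing via Jordan--Hölder factors shared between different types, using the graph-theoretic combinatorics of $X(\sigma)$ and the injection $\theta$ from \S\ref{Sec Comp}. For (i): a Serre weight $\sigma_v\in W(\rhobar)$ is handled by choosing a type $\tau$ with $\sigma_v$ in the cosocle of $\ovl{\sigma(\tau)}$, taking the sublattice with cosocle $\sigma_v$, and reducing; the scheme-theoretic support is cut out by the special fiber of $R_\infty^{(1,0),\tau}$, which is formally smooth by the last sentence of Theorem \ref{Comp Thm single}, and freeness of rank $r$ descends from (ii) by the exactness of $M_\infty$. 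For (iii): filtering $\wt P_{\sigma_v}$ by lattices in the various $\sigma(\tau)$ with $\sigma_v\in\JH(\ovl{\sigma(\tau)})$, one gets that $M_\infty(\wt P_{\sigma_v})$ is free of rank $r$ over $R_\infty/\cap_\tau\fp_\tau$; here the essential point is that $\cap_\tau\fp_\tau$ is a \emph{radical} ideal with the components $\fp_\tau$ pairwise in "good position", which is precisely the content of the intersection computation inside Theorem \ref{Comp Thm multi-type} (the $\bigcap_{\wt w\in X(\sigma)}$ appearing in (\ref{Comp Eq ism multi})), combined with Proposition \ref{Comp Prop p} to control what happens along the special fiber.

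The hard part, and the place where this genuinely goes beyond \cite{BHHMS}, is bookkeeping the discrepancy that a non-semisimple $\rhobar$ has \emph{strictly fewer} Serre weights than a semisimple one: the map $\theta\colon W(\rhobar)\hookrightarrow\{t_{(2,1)},t_{(1,2)}\}^f$ is only an injection, so several of the "expected" patched modules are zero and the combinatorics of which $X(\sigma)$ and which types are nonempty must be tracked carefully to establish that $r$ is still well-defined and uniform, and that the supports in (i) and the intersections in (iii) come out with exactly the structure claimed. Concretely, one must check that for every relevant $\tau$ the condition $\wt w\in X(\rhobar)$ of Lemma \ref{Comp Lem exist Kisin} is met (so that the Kisin module exists and the deformation ring computation applies), and that the "equation (25)" phenomenon flagged in the proof of Theorem \ref{Comp Thm single} does not obstruct anything. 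Once this combinatorial matching is in place, the remaining arguments — Cohen--Macaulayness of patched modules, Auslander--Buchsbaum to get projective dimension zero, and the comparison of ranks across types — are formally identical to \cite[\S8.2]{BHHMS}, so I would state them briefly and refer to \loc\ for the details.
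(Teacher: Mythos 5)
There is a genuine gap, and it is concentrated in two places. First, you attribute the key deformation-theoretic input to the wrong results: Theorems \ref{Comp Thm single} and \ref{Comp Thm multi-type} and Proposition \ref{Comp Prop p} concern potentially crystalline deformation rings of Hodge--Tate weights in $\set{(3,0),(2,1)}$, and in this paper they enter only later, in Proposition \ref{Global Prop R} (e.g.\ the primality of $\Ker(R_\infty\onto R_\infty^{(2,-1)_j,\tau})$ and the statement $p\in\fq\cap\fq'+\fp_{\wt w}^{(j),(3,0)}$). The present proposition is entirely about the potentially Barsotti--Tate rings $R_\infty^{(1,0),\tau}$, whose structure (domain, and the formal smoothness of the supports in (i)) comes from \cite[Thm.~7.2.1, \S10.1]{EGS15}, which is already available for non-semisimple $\rbar_v$; in particular your appeal to ``the last sentence of Theorem \ref{Comp Thm single}'' for formal smoothness of the support in (i) is incorrect both in content (that sentence gives reducedness, not formal smoothness) and in the weights it concerns. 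For (i)--(ii) the paper simply runs the argument of \cite[Prop.~8.2.3]{BHHMS}, the only point needing care being that the weights in $W(\rbar_v^\vee)$ remain ``connected'' by non-split extensions (via (\ref{Comp Eq SW}) and \cite[Lemma~2.4.6]{BHHMS}) so that the rank $r$ is uniform; your appeal to $X(\sigma)$ and $\theta$ is not the relevant combinatorics for this step.

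Second, and more seriously, part (iii) is where the actual work lies, and your one-line plan (``filtering $\wt P_{\sigma_v}$ by lattices in the various $\sigma(\tau)$'') does not constitute an argument: the projective envelope admits no such straightforward filtration with controlled patched subquotients, and the intersection $\cap_\tau\fp_\tau$ is not controlled by Theorem \ref{Comp Thm multi-type} or Proposition \ref{Comp Prop p} (again, wrong weights). The paper instead upgrades the strategy of \cite[Thm.~4.9]{Le19} from multiplicity one to rank $r$: one first proves freeness of $M_\infty(R_\mu/\Fil^2_\otimes R_\mu)$ by identifying it with $M_\infty\bigbra{\ovl{\sigma}^0(\tau)/\rad^2\ovl{\sigma}^0(\tau)}$ and invoking the capped-interval analysis of \cite[\S10.1]{EGS15}; one then inducts over subsets $I\subseteq S$ via the quotients $\wt R_{\mu,I}$, showing $M_\infty(\wt R_{\mu,I})$ is minimally generated by $r$ elements with $p$-torsion-free scheme-theoretic support $R_\infty\otimes_{R_{\rhobar}}R_{\rhobar}^{T_{\sigma,I}}$, comparing with $\bigoplus_{\tau\in T_{\sigma,I}}M_\infty(\sigma^0(\tau))[1/p]$ (lattices with irreducible cosocle, pairwise disjoint supports) to get an $r$-generator surjection that is an isomorphism after inverting $p$ and hence injective; finally one takes $I=\emptyset$ and uses the Chinese remainder theorem plus $p$-torsion-freeness to identify $R_\infty\otimes_{R_{\rhobar}}R_{\rhobar}^{T_{\sigma,\emptyset}}$ with $R_\infty/\cap_\tau\fp_\tau$, each $\fp_\tau$ being prime by \cite[Thm.~7.2.1]{EGS15}. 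None of these steps appears in your proposal, so as written it would not yield (iii).
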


\begin{proof}
    The proof of (i) and (ii) is analogous to the one of \cite[Prop.~8.2.3]{BHHMS}. In the case $|\cJ|=2$ (see the fourth paragraph of the proof of $\loc$), we used the ``connectedness" of $W(\rbar_v^{\vee})$ by non-split extensions to deduce that $M_{\infty}(\sigma)$ has the same rank over its scheme-theoretic support for $\sigma\in W(\rbar_v^{\vee})$. In general the Serre weights in $W(\rbar_v^{\vee})$ can still be ``connected" by non-split extensions by (\ref{Comp Eq SW}) and \cite[Lemma~2.4.6]{BHHMS}.

    Now we prove (iii). The strategy of the proof is very close to the one of \cite[Thm.~4.9]{Le19} which treats the case $r=1$. We freely use the notation from \loc
    
    First we show that $M_{\infty}(R_{\mu}/\Fil_{\otimes}^2R_{\mu})$ is free of rank $r$ over its scheme-theoretic support (see \cite[Lemma~4.3]{Le19}), where $R_{\mu}$ is the same as $P_{\sigma_v}$ and $\Fil_{\otimes}^2R_{\mu}$ is a certain submodule of $R_{\mu}$ defined in \cite[\S3]{LMS20}. The argument of \cite[Lemma~4.3]{Le19} gives a tame inertial type $\tau$ and a $\GL_2(\cO_{F_v})$-invariant $W(\FF)$-lattice $\sigma^0(\tau)$ in $\sigma(\tau)$ such that 
    \begin{equation*}
        M_{\infty}(R_{\mu}/\Fil_{\otimes}^2R_{\mu})\cong M_{\infty}\bigbra{\ovl{\sigma}^0(\tau)/\rad^2\ovl{\sigma}^0(\tau)},
    \end{equation*}
    where $\ovl{\sigma}^0(\tau)$ is the reduction modulo $p$ of $\sigma^0(\tau)$ and $\rad^2\ovl{\sigma}^0(\tau)\eqdef\rad\bigbra{\rad\ovl{\sigma}^0(\tau)}$ is the radical of the radical of $\ovl{\sigma}^0(\tau)$ as an $\FF[\GL_2(\cO_{F_v})]$-module. By the notation of the proof of \cite[Prop.~8.2.3]{BHHMS} based on \cite[\S10.1]{EGS15}, the representation $\ovl{\sigma}^0(\tau)/\rad^2\ovl{\sigma}^0(\tau)$ has the form $\ovl{\sigma}^{\cJ_0}$ for some capped interval $\cJ_0\subseteq\sset{0,\ldots,f-1}$. Hence $M_{\infty}\bigbra{\ovl{\sigma}^0(\tau)/\rad^2\ovl{\sigma}^0(\tau)}$ is free of rank $r$ over its scheme-theoretic support by the proof of \cite[Prop.~8.2.3]{BHHMS}.
    
    Next we show that if $I\subseteq S$ is such that $\big|I\cap\set{\pm\omega^{(i)}}\big|+\big|S_{\rhobar}^{\sigma}\cap\set{\pm\omega^{(i)}}\big|=1$ for each $0\leq i\leq f-1$, then $M_{\infty}(\wt{R}_{\mu,I})$ is free of rank $r$ over its scheme-theoretic support, which is   $R_{\infty}\otimes_{R_{\rhobar}}R_{\rhobar}^{T_{\sigma,I}}$ (see \cite[Prop.~4.6,~Prop.~4.7]{Le19}). Here $S=\set{\pm\omega^{(i)}}_{0\leq i\leq f-1}$ whose subsets parametrize the Serre weights that appear in $R_{\mu}$, $\rhobar$ is the same as $\rbar_v^{\vee}$, $S_{\rhobar}^{\sigma}$ is a subset of $S$ satisfying $W(\rhobar)=\set{\sigma_J|J\subset S_{\rhobar}^{\sigma}}$, $\wt{R}_{\mu}$ is the same as $\wt{P}_{\sigma_v}$, $\wt{R}_{\mu,I}$ is a certain quotient of $\wt{R}_{\mu}$, $T_{\sigma,I}$ is the set of tame inertial types that appear as subquotients in the $\Gamma$-representation $\wt{R}_{\mu,I}[1/p]$ over $E$ and $R_{\rhobar}^{T_{\sigma,I}}$ parametrizes potentially crystalline framed deformations of $\rhobar$ of Hodge--Tate weights $(1,0)$ at each embedding and inertial type in $T_{\sigma,I}$. In fact, the argument of \cite[Prop.~4.6]{Le19} shows that $M_{\infty}(\wt{R}_{\mu,I})$ is minimally generated by $r$ elements, and the argument of \cite[Prop.~4.7]{Le19} shows that $M_{\infty}(\wt{R}_{\mu,I})$ has scheme-theoretic support $R_{\infty}\otimes_{R_{\rhobar}}R_{\rhobar}^{T_{\sigma,I}}$ which is $p$-torsion free. Moreover, by exactness of $M_{\infty}$ we have
    \begin{equation*}
        M_{\infty}\big(\wt{R}_{\mu,I}\big)[1/p]\cong\bigoplus\limits_{\tau\in T_{\sigma,I}}M_{\infty}(\sigma^0(\tau))[1/p]
    \end{equation*}
    for any choices of $\GL_2(\cO_{F_v})$-stable $W(\FF)$-lattices $\sigma^0(\tau)\subset\sigma(\tau)$. In particular, we can take $\sigma^0(\tau)$ to have irreducible cosocle (see \cite[Lemma~4.1.1]{EGS15}). By (ii) and the fact that the supports of $M_{\infty}(\sigma^0(\tau))[1/p]$ are pairwise disjoint for $\tau\in T_{\sigma,I}$, it follows that  $M_{\infty}\big(\wt{R}_{\mu,I}\big)[1/p]$ is free of rank $r$ over $\big(R_{\infty}\otimes_{R_{\rhobar}}R_{\rhobar}^{T_{\sigma,I}}\big)[1/p]$. We conclude that there is a surjection $\bbra{R_{\infty}\otimes_{R_{\rhobar}}R_{\rhobar}^{T_{\sigma,I}}}^r\onto M_{\infty}(\wt{R}_{\mu,I})$, which is an isomorphism when inverting $p$ by \cite[Thm.~2.4]{Mat89}, hence is also injective since $R_{\infty}\otimes_{R_{\rhobar}}R_{\rhobar}^{T_{\sigma,I}}$ is $p$-torsion free.

    
    Then we show that if $I\subseteq S$ is such that $\big|I\cap\set{\pm\omega^{(i)}}\big|+\big|S_{\rhobar}^{\sigma}\cap\set{\pm\omega^{(i)}}\big|\leq1$ for each $0\leq i\leq f-1$, then $M_{\infty}(\wt{R}_{\mu,I})$ is free of rank $r$ over its scheme-theoretic support, which is   $R_{\infty}\otimes_{R_{\rhobar}}R_{\rhobar}^{T_{\sigma,I}}$ (see \cite[Thm.~4.9]{Le19}). The proof is completely analogous to that of \cite[Thm.~4.9]{Le19}.
    
    In particular if we take $I=\emptyset$ so that we have $\wt{R}_{\mu,\emptyset}=\wt{R}_{\mu}=\wt{P}_{\sigma_v}$, we get that $M_{\infty}(\wt{P}_{\sigma_v})$ is free of rank $r$ over its scheme-theoretic support, which is $R_{\infty}\otimes_{R_{\rhobar}}R_{\rhobar}^{T_{\sigma,\emptyset}}$. Moreover, by the Chinese reminder theorem we have
    \begin{equation*}
        \big(R_{\infty}\otimes_{R_{\rhobar}}R_{\rhobar}^{T_{\sigma,\emptyset}}\big)[1/p]\cong\bigoplus_{\tau}\big(R_{\infty}/\fp_{\tau}\big)[1/p]\cong\big(R_{\infty}/\cap_{\tau}\fp_{\tau}\big)[1/p],
    \end{equation*}
    where $\tau$ runs over all tame inertial types such that $\sigma_v\in\JH(\ovl{\sigma(\tau)})$. Since both $R_{\infty}/\fp_{\tau}$ and $R_{\infty}^{(1,0),\tau}$ are $p$-torsion free $\cO$-algebras, we deduce that $R_{\infty}\otimes_{R_{\rhobar}}R_{\rhobar}^{T_{\sigma,\emptyset}}\cong R_{\infty}/\cap_{\tau}\fp_{\tau}.$  Finally, each $\fp_{\tau}$ is a prime ideal because $R_{\infty}/\fp_{\tau}=R_{\infty}^{(1,0),\tau}$ is a domain by \cite[Thm.~7.2.1]{EGS15}.
\end{proof}

Now we fix $\sigma_v\in W(\rbar_v^{\vee})$. We let $R_{2.j}'$ (resp.\,$R$) be the representation of $\GL_2(\cO_{F_v})$ on $W(\FF)$-modules defined as in \cite[p.96]{BHHMS} (resp.\,\cite[(56)]{BHHMS}). In particular, we have $R/pR\cong\Proj_{\wt{\Gamma}}\sigma_v$ (see \cite[Cor.~7.3.4]{BHHMS}). Let $r$ be the integer as in Proposition \ref{Global Prop type}. The following proposition is a generalization of \cite[Thm.~8.3.4]{BHHMS}, \cite[Thm.~8.3.9]{BHHMS},
\cite[Cor.~8.3.10]{BHHMS} and \cite[Thm.~8.3.11]{BHHMS} (where $\rbar_v$ was assumed to be semisimple).

\begin{proposition}\label{Global Prop R}
\begin{enumerate}
    \item 
    For each $0\leq j\leq f-1$, the module $M_{\infty}(R_{2,j}')$ is free of rank $r$ over $R_{\infty}/\cap_{\tau}\fp_{\tau}$, where $\tau$ runs over all tame inertial types such that $\sigma_v\in\JH(\ovl{\sigma(\tau)})$ and $\fp_{\tau}$ is the prime ideal $\Ker(R_{\infty}\onto R_{\infty}^{(2,-1)_j,\tau})$ of $R_{\infty}$, where $(2,-1)_j$ is $(2,-1)$ in the $j$-th embedding and $(1,0)$ elsewhere.
    \item
    The module $M_\infty(R)$ is free of rank $r$ over $R_\infty/\cap_{\lambda,\tau}{\fp}_{\lambda,\tau}$, where $\tau$ runs over all tame inertial types such that $\sigma_v\in \JH(\ovl{\sigma(\tau)})$, $\lambda=(\lambda_{j})_{0\leq j\leq f-1}$ runs over the Hodge--Tate weights such that $\lambda_{j}\in \set{(1,0),(2,-1)}$ for all $j$ and ${\fp}_{\lambda,\tau}$ is the prime ideal $\ker(R_\infty\onto  R_\infty^{\lambda,{\tau}})$ of $R_{\infty}$. In particular, we have $\dim_{\FF}M_\infty(R)/{\fm}_\infty =r$.
    \item
    The surjection
    \begin{equation*}
        \Proj_{\wt{\Gamma}}\sigma_v\onto\sigma_v
    \end{equation*}
    induces an isomorphism of nonzero finite-dimensional $\FF$-vector spaces
    \begin{equation*}
        M_\infty\big(\Proj_{\wt{\Gamma}}\sigma_v\big)/\fm_{\infty}\ism M_\infty(\sigma_v)/\fm_{\infty}.
    \end{equation*}
\end{enumerate}
\end{proposition}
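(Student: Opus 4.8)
The plan is to follow \cite[\S8.3]{BHHMS} closely: the proposition is the exact analogue of \cite[Thm.~8.3.4, Thm.~8.3.9, Cor.~8.3.10, Thm.~8.3.11]{BHHMS}, and those proofs imitate the inductive strategy of \cite[Thm.~4.9]{Le19} (see also \cite{LMS20}), building $M_\infty$ of the ``higher Hodge--Tate weight'' modules $R_{2,j}'$ and $R$ out of $M_\infty(\wt{P}_{\sigma_v})$ --- which is already controlled by Proposition \ref{Global Prop type}(iii) --- by means of the exact sequences relating the building blocks of \cite[\S7]{BHHMS} together with the exactness of $M_\infty$. The only genuinely new point in the non-semisimple case is combinatorial bookkeeping: one works with the smaller weight set $W(\rbar_v^\vee)$, the tame types $\tau$ with $\sigma_v\in\JH(\ovl{\sigma(\tau)})$ are those of the form $\tauw$ for $\wt{w}\in X(\rbar_v^\vee)$ (Lemma \ref{Comp Lem exist Kisin}), and --- after the fixed algebraic twist under which $(2,-1)$ corresponds to $(3,0)$ and $(1,0)$ to $(2,1)$ --- the potentially crystalline deformation rings $R_\infty^{\lambda,\tau}$ that occur are precisely the rings $R^{\lambda,\tauw}_{\rhobar}$ and $R^{\leq(3,0),\sigma}_{\rhobar}$ computed in Theorems \ref{Comp Thm single} and \ref{Comp Thm multi-type}. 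So I would verify that these substitutions are legitimate and then carry over the arguments of \loc; below I indicate only the mechanism.

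For (i): the module $R_{2,j}'$ is $p$-torsion free, and $R_{2,j}'[1/p]$ is a direct sum, over the tame types $\tau$ with $\sigma_v\in\JH(\ovl{\sigma(\tau)})$, of (algebraic twists of) $\sigma(\tau)$ realizing Hodge--Tate weights $(2,-1)$ at $j$ and $(1,0)$ elsewhere. By exactness of $M_\infty$ and Proposition \ref{Global Prop type}(ii), $M_\infty(R_{2,j}')[1/p]\cong\bigoplus_\tau M_\infty(\sigma^0(\tau))[1/p]$ is free of rank $r$ over $\bigl(R_\infty/\cap_\tau\fp_\tau\bigr)[1/p]$, the supports $\Spec R_\infty^{(2,-1)_j,\tau}[1/p]$ being pairwise disjoint domains. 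To obtain the integral statement I would, following \cite[Prop.~4.6]{Le19}, show that $M_\infty(R_{2,j}')$ is generated by $r$ elements over $R_\infty$ (this is the combinatorial heart of the matter, using the exact sequences expressing $R_{2,j}'$ through $\wt{P}_{\sigma_v}$ and Serre weights, whose images under $M_\infty$ are known from Proposition \ref{Global Prop type}), and, following \cite[Prop.~4.7]{Le19}, that $R_\infty/\cap_\tau\fp_\tau$ is $p$-torsion free (as a quotient by an intersection of primes with $\cO$-flat quotients). Then the resulting surjection $\bigl(R_\infty/\cap_\tau\fp_\tau\bigr)^{\oplus r}\onto M_\infty(R_{2,j}')$ becomes an isomorphism after inverting $p$ by the generic computation together with \cite[Thm.~2.4]{Mat89}, hence is an isomorphism, being injective because the source is $p$-torsion free.

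Part (ii) is the identical argument with $R$ in place of $R_{2,j}'$: now $R/pR\cong\Proj_{\wt{\Gamma}}\sigma_v$ and $R[1/p]$ decomposes over all pairs $(\lambda,\tau)$ with $\lambda_j\in\{(1,0),(2,-1)\}$ for all $j$ and $\sigma_v\in\JH(\ovl{\sigma(\tau)})$, which yields that $M_\infty(R)$ is free of rank $r$ over $R_\infty/\cap_{\lambda,\tau}\fp_{\lambda,\tau}$; since that ring is complete Noetherian local with residue field $\FF$, reducing modulo $\fm_\infty$ gives $\dim_\FF M_\infty(R)/\fm_\infty=r$. Part (iii) then follows by comparing dimensions modulo $\fm_\infty$: by exactness $M_\infty(\Proj_{\wt{\Gamma}}\sigma_v)=M_\infty(R/pR)=M_\infty(R)/pM_\infty(R)$ is free of rank $r$ over $\bigl(R_\infty/\cap_{\lambda,\tau}\fp_{\lambda,\tau}\bigr)/p$, so $\dim_\FF M_\infty(\Proj_{\wt{\Gamma}}\sigma_v)/\fm_\infty=r$, while Proposition \ref{Global Prop type}(i) gives $\dim_\FF M_\infty(\sigma_v)/\fm_\infty=r$ as well; and the surjection $\Proj_{\wt{\Gamma}}\sigma_v\onto\sigma_v$ induces (by exactness of $M_\infty$, then right-exactness of $(-)\otimes_{R_\infty}\FF$) a surjection $M_\infty(\Proj_{\wt{\Gamma}}\sigma_v)/\fm_\infty\onto M_\infty(\sigma_v)/\fm_\infty$ between $\FF$-vector spaces of the same dimension $r\geq 1$, hence an isomorphism of nonzero finite-dimensional spaces.

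The hard part is the generator count in (i) and (ii): making the inductive argument of \cite[Thm.~4.9]{Le19} close needs precise control of the deformation rings $R_\infty^{\lambda,\tau}$ --- reducedness of their special fibres and the explicit ideal descriptions --- which is exactly what Theorems \ref{Comp Thm single} and \ref{Comp Thm multi-type} provide, together with a careful check that in the non-semisimple case the combinatorial incidence data (the weights in $W(\rbar_v^\vee)$, the tame types, and their relation encoded by $\theta$ and $X(\sigma)$) are compatible with the building blocks of \cite[\S7]{BHHMS}. Everything else is the formal commutative algebra of \loc.
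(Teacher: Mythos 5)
Your part (iii) is fine and matches the paper: it is deduced from (ii) together with Proposition \ref{Global Prop type}(i) by comparing $\FF$-dimensions modulo $\fm_\infty$, exactly as you write. The problem is in (i)--(ii), where you model the whole argument on the ``Le19 scheme'' (bound the number of generators, compare generic fibres, use $p$-torsion-freeness and \cite[Thm.~2.4]{Mat89}). That is the proof of Proposition \ref{Global Prop type}(iii) (the analogue of \cite[Prop.~8.2.6]{BHHMS}), not the proof of \cite[Thm.~8.3.4, Thm.~8.3.9]{BHHMS} that this proposition actually follows. For (ii) your reduction is circular: the generator bound you would need is $\dim_\FF M_\infty(R)/\fm_\infty\leq r$, but since $R/pR\cong\Proj_{\wt{\Gamma}}\sigma_v$ and $p\in\fm_\infty$, this is precisely the nontrivial half of part (iii) -- which you then deduce from (ii). There is no a priori surjection onto $R$ (or onto $R/pR=\Proj_{\wt{\Gamma}}\sigma_v$) from an object whose $M_\infty$ is already known to be generated by $r$ elements, so the ``identical argument with $R$ in place of $R_{2,j}'$'' does not get off the ground.

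The paper's route to (ii), following \cite[Thm.~8.3.9]{BHHMS}, is instead to build $M_\infty(R)$ from the modules $M_\infty(R_{2,j}')$ of part (i) (which your argument for (ii) never uses) by gluing the components corresponding to the different Hodge--Tate weights $\lambda$ along the special fibre; the commutative-algebra input that makes this gluing work is Proposition \ref{Comp Prop p}, i.e.\ $p\in\fq_1^{(j),(2,1)}\cap\fq_2^{(j),(2,1)}+\fp_{\wt{w}}^{(j),(3,0)}$ (resp.\ $p\in\fq_2^{(j),(2,1)}\cap\fq_3^{(j),(2,1)}+\fp_{\wt{w}}^{(j),(3,0)}$), which says that the components with weight $(2,1)$ and $(3,0)$ at a given embedding can only meet inside the special fibre. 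This proposition is the reason the paper generalizes \cite[Prop.~4.3.3]{BHHMS} to the non-semisimple case, and it appears nowhere in your proposal; Theorems \ref{Comp Thm single} and \ref{Comp Thm multi-type} (reducedness of special fibres, explicit ideals, primality of the $\fp_\tau$) do not by themselves supply it. A milder version of the same caveat applies to (i): the bound on generators of $M_\infty(R_{2,j}')$ is not formal either, and the argument of \cite[Thm.~8.3.4]{BHHMS} obtains freeness from Proposition \ref{Global Prop type}(iii) and the explicit description of the ideals $\fp_\tau$ rather than from the generic fibre alone. So the skeleton you propose needs to be replaced, for (i) and especially (ii), by the actual inductive gluing arguments of \cite[\S8.3]{BHHMS}, with Proposition \ref{Comp Prop p} as an essential new ingredient.
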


\begin{proof}
    The proof of (i) is analogous to the one of \cite[Thm.~8.3.4]{BHHMS}. All the arguments concerning the set of Serre weights $W(\rbar_v^{\vee})$ go through in the general case because we always have $W(\rbar_v^{\vee})\subseteq W((\rbar_v^{\vee})^{\ss})$ (see (\ref{Comp Eq SW})). We also replace \cite[Prop.~8.2.6]{BHHMS} by Proposition \ref{Global Prop Proj}(iii). Finally each $\fp_{\tau}$ is a prime ideal by Theorem \ref{Comp Thm single}.
    
    The proof of (ii) is analogous to the one of \cite[Thm.~8.3.9]{BHHMS} using the same comment on $W(\rbar_v^{\vee})$ together with Proposition \ref{Global Prop Proj}(iii), Proposition \ref{Comp Prop p} and (i).
    
    Finally, (iii) is a direct consequence of (ii) and Proposition \ref{Global Prop type}(i).
\end{proof}

\hspace{\fill}

We let $\pi'$ be the admissible smooth representation of $\GL_2(F_v)$ defined as in \cite[(65),(66)]{BHHMS}. Recall that we defined the Gelfand--Kirillov dimension $\dim_{\GL_2(F_v)}(\pi)$ in \S\ref{Sec GK}. The following theorem is a generalization of \cite[Thm.~8.4.1]{BHHMS}, \cite[Thm.~8.4.2]{BHHMS} and \cite[Cor.~8.4.6]{BHHMS}, where $\rbar_v$ was assumed to be semisimple.

\begin{theorem}\label{Global Thm main}
\begin{enumerate}
    \item 
    We have $\dim_{\GL_2(F_v)}(\pi')=[F_v:\Qp]$.
    \item
    There is an integer $r\geq 1$ such that
    \begin{equation*}
        \pi'[\fm_{K_1}^2]\cong \big(\wt{D}_0(\rbar_v^\vee)\big)^{\oplus r},
    \end{equation*}
    where $\wt{D}_0(\rbar_v^\vee)$ is defined in \S\ref{Sec GK}. In particular, each irreducible constituent of $\pi[\fm_{K_1}^2]$ has multiplicity $r$.
    \item
    Let $\pi$ be the admissible smooth representation of $\GL_2(F_v)$ over $\FF$ defined as in \cite[Cor.~8.4.6]{BHHMS}. Then we have $\dim_{\GL_2(F_v)}(\pi)=[F_v:\Qp]$.
\end{enumerate}
\end{theorem}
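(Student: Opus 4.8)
~\textbf{Plan.} The strategy is to deduce Theorem~\ref{Global Thm main} from the local-global machinery assembled in the previous sections, in exact parallel with the semisimple case of \cite[\S8.4]{BHHMS}, using that all the intermediate results (Proposition~\ref{Global Prop type}, Proposition~\ref{Global Prop R}, Theorem~\ref{GK Thm upper}) have now been extended to allow $\rbar_v$ non-semisimple. For part~(i), the lower bound $\dim_{\GL_2(F_v)}(\pi')\geq f$ follows from the fact that $M_\infty(\Proj_{\wt\Gamma}\sigma_v)$ is a faithful module over $R_\infty/\cap_{\lambda,\tau}\fp_{\lambda,\tau}$ of rank $r$ (Proposition~\ref{Global Prop R}(ii)) together with the standard computation that this quotient of the local deformation ring has Krull dimension giving growth rate $f$ after passing to the dual of $\pi'$; concretely one transports the Krull dimension of the scheme-theoretic support through the patching functor to a growth estimate on $\pi'$, exactly as in \cite[Thm.~8.4.1]{BHHMS}. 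The upper bound $\dim_{\GL_2(F_v)}(\pi')\leq f$ comes from verifying hypotheses (a), (b), (c) of Theorem~\ref{GK Thm upper} for $\pi=\pi'$ (with $\rhobar=\rbar_v^\vee$): hypothesis~(a) and~(c) follow from the computation of $\soc_{\GL_2(\cO_{F_v})}\pi'$ and $(\pi')^{I_1}$ via $M_\infty$ applied to Serre weights and to $D_0,D_1$, while hypothesis~(b) is where Proposition~\ref{Global Prop R}(iii) enters, identifying $M_\infty(\Proj_{\wt\Gamma}\sigma_v)/\fm_\infty$ with $M_\infty(\sigma_v)/\fm_\infty$ so that no extra constituents appear in $\pi'[\fm_{K_1}^2]$ beyond those forced by the socle.

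For part~(ii), one first shows $\pi'[\fm_{K_1}^2]$ is, as a $\wt\Gamma$-representation, a direct sum of copies of $\wt D_0(\rbar_v^\vee)$: the containment $\pi'[\fm_{K_1}^2]\hookrightarrow \wt D_0(\rbar_v^\vee)^{\oplus r}$ follows from the multiplicity bound in Theorem~\ref{GK Thm upper}(b) combined with the characterization of $\wt D_0$ by properties (i)--(iii) in \S\ref{Sec GK} (maximality forces $\pi'[\fm_{K_1}^2]$ into a sum of $\wt D_{0,\sigma}$'s), and the reverse inclusion follows from a dimension count: by Proposition~\ref{Global Prop R}(ii) we have $\dim_\FF M_\infty(R)/\fm_\infty = r$ with $R/pR\cong\Proj_{\wt\Gamma}\sigma_v$, and applying $M_\infty$ to the surjection $\Proj_{\wt\Gamma}\sigma_v\onto\wt D_{0,\sigma_v}(\rbar_v^\vee)$ together with exactness of $M_\infty$ and the multiplicity-freeness of $\wt D_0$ (from \cite[Thm.~4.6]{HW}) pins down the multiplicity $r$ exactly, as in \cite[Thm.~8.4.2]{BHHMS}. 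The integer $r$ here is the same one produced in Proposition~\ref{Global Prop type}. For part~(iii), $\pi$ is constructed as in \cite[Cor.~8.4.6]{BHHMS} as the $\GL_2(F_v)$-subrepresentation of a patched cohomology module cut out by the relevant Hecke eigensystem; one checks that $\pi$ still satisfies hypotheses (a)--(c) of Theorem~\ref{GK Thm upper} (this uses that $\pi$ and $\pi'$ have the same $\GL_2(\cO_{F_v})$-socle and the same $[\fm_{K_1}^2]$-invariants up to the patching data, which is formal once (ii) is known) to get $\dim\leq f$, and the lower bound $\dim\geq f$ follows the same deformation-ring dimension argument as in (i), or alternatively from the fact that $\pi$ contains $\pi'$ (or a twist), so $\dim_{\GL_2(F_v)}(\pi)\geq\dim_{\GL_2(F_v)}(\pi')=f$.

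The main obstacle is not any single step in isolation but rather verifying that every place where \cite[\S8.4]{BHHMS} invokes the semisimplicity of $\rbar_v$ has a valid replacement. The two genuinely new inputs that make this possible are: the computation of the potentially crystalline deformation rings $R^{\leq(3,0),\tauw}_{\rhobar}$ and $R^{\leq(3,0),\sigma}_{\rhobar}$ in the non-semisimple case (Theorem~\ref{Comp Thm single}, Theorem~\ref{Comp Thm multi-type}, Proposition~\ref{Comp Prop p}), which feed into the scheme-theoretic support statements of Proposition~\ref{Global Prop R}; and the structural description of $\wt D_0(\rbar_v^\vee)$ and its multiplicity-freeness from \cite[\S4]{HW}, which is what makes the argument for part~(ii) go through despite $W(\rbar_v^\vee)$ being strictly smaller than $W((\rbar_v^\vee)^{\ss})$. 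I expect the care needed in part~(ii)---ensuring the dimension count $\dim_\FF M_\infty(R)/\fm_\infty=r$ really does force equality $\pi'[\fm_{K_1}^2]\cong\wt D_0(\rbar_v^\vee)^{\oplus r}$ rather than just an inclusion---to be the most delicate point, since one must use both the maximality property of $\wt D_0$ and the freeness of rank $r$ over the full deformation ring quotient simultaneously.
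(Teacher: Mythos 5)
Your proposal follows essentially the same route as the paper's proof: the upper bound in (i) via Theorem \ref{GK Thm upper} with its condition (ii) supplied by Proposition \ref{Global Prop R}(iii) and the remaining hypotheses checked as in \cite[Thm.~8.4.1]{BHHMS}, part (ii) by the argument of \cite[Thm.~8.4.2]{BHHMS} using $\dim_\FF M_\infty(R)/\fm_\infty=r$ and the multiplicity-freeness of $\wt{D}_0(\rbar_v^\vee)$ from \cite{HW}, and part (iii) deduced from (i) as in \cite[Cor.~8.4.6]{BHHMS}. The level of detail matches the paper's own (citation-based) proof, and the minor differences (labeling of the hypotheses of Theorem \ref{GK Thm upper}, the exact relation between $\pi$ and $\pi'$ in (iii)) do not affect the argument.
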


\begin{proof}
    The proof of (i) is analogous to the one of \cite[Thm.~8.4.1]{BHHMS}. For the upper bound of the Gelfand-Kirillov dimension, we use Theorem \ref{GK Thm upper}. The condition (ii) in Theorem \ref{GK Thm upper} is guaranteed by Proposition \ref{Global Prop R} (iii), and the conditions (i) and (iii) in Theorem \ref{GK Thm upper} are satisfied as in the proof of \cite[Thm.~8.4.1]{BHHMS}.
    
    The proof of (ii) is completely analogous to the one of \cite[Thm~8.4.2]{BHHMS} using Proposition \ref{Global Prop R} (iii). Here we recall that the $\wt{\Gamma}$-representation $\wt{D}_0(\rbar_v^\vee)$ is multiplicity-free.
    
    Finally, the proof of (iii) is completely analogous to that of \cite[Cor~8.4.6]{BHHMS} using (i).
\end{proof}

\subsection*{Acknowledgement}

\hspace{1.5em}%
The author is extremely grateful to Christophe Breuil for suggesting this problem and to Christophe Breuil and Ariane M\'ezard for helpful discussions and for comments on earlier drafts of this paper. The author thanks Stefano Morra for answering questions on computing Galois deformation rings, and thanks Yongquan Hu for answering questions on representation theory.

This work was supported by the Ecole Doctorale de Math\'ematiques Hadamard (EDMH).


\bibliography{bib/1}
\bibliographystyle{plain}

\end{document}